\newtheorem{theorem}{Theorem}
\newtheorem{lemma}[theorem]{Lemma}
\newtheorem{proposition}[theorem]{Proposition}
\newtheorem{corollary}[theorem]{Corollary}
\newcommand{\nats}{{\mathbb N}}
\theoremstyle{definition}
\newtheorem{example}[theorem]{Example}
\journal{}%Journal Combinatorial Theory Series A}
\date{}
\begin{document}
\begin{frontmatter}

\title{Abelian returns in Sturmian words}

%% use optional labels to link authors explicitly to addresses:
%% \author[label1,label2]{<author name>}
%% \address[label1]{<address>}
%% \address[label2]{<address>}
%% Title, authors and addresses

%% use the tnoteref command within \title for footnotes;
%% use the tnotetext command for the associated footnote;
%% use the fnref command within \author or \address for footnotes;
%% use the fntext command for the associated footnote;
%% use the corref command within \author for corresponding author footnotes;
%% use the cortext command for the associated footnote;
%% use the ead command for the email address,
%% and the form \ead[url] for the home page:
%%
%% \title{Title\tnoteref{label1}}
%% \tnotetext[label1]{}
%% \author{Name\corref{cor1}\fnref{label2}}
%% \ead{email address}
%% \ead[url]{home page}
%% \fntext[label2]{}
%% \cortext[cor1]{}
%% \address{Address\fnref{label3}}
%% \fntext[label3]{}

\author[label3,label4]{Svetlana Puzynina\corref{cor1}\fnref{label1}}
  \ead{svepuz@utu.fi}
  %  \ead[url]{http://math.nsc.ru/~puzynina}

 \author[label3,label5]{Luca Q. Zamboni\fnref{label2}}
  \ead{zamboni@math.univ-lyon1.fr}
  %\ead[url]{home page}

  \fntext[label1]{Partially
supported by grant no. 251371 from the Academy of Finland, by
Russian Foundation of Basic Research (grant 10-01-00424) and by RF President grant for young scientists (MK-4075.2012.1).}
  \fntext[label2]{Partially supported by a grant from the Academy of Finland and by
ANR grant {\sl SUBTILE}.}
 %\corauth[cor1]{Corresponding author.}
\address[label3]{University of Turku, Finland}
\address[label4]{Sobolev Institute of Mathematics, Russia}%\\ pr. Koptyuga 4, Novosibirsk 630090, Russia}%\thanksref{label3}}
\address[label5]{Universit\'e de Lyon 1, France}

\begin{abstract}
Return words constitute a powerful tool for studying symbolic
dynamical systems.  They may be regarded as a discrete analogue of
the first return map in dynamical systems. In this paper we
investigate  two abelian variants of the notion of return word,
each of them gives rise to a new characterization of Sturmian
words. We prove that a recurrent infinite word is Sturmian if and
only if each of its factors has two or three abelian (or
semi-abelian) returns. We study the structure of abelian returns
in Sturmian words and give a characterization of those factors
having exactly two abelian returns. Finally we discuss connections
between abelian returns and periodicity in words.
\end{abstract}

\begin{keyword}
Sturmian word, return word, abelian equivalence

%% keywords here, in the form: keyword \sep keyword

%% MSC codes here, in the form: \MSC code \sep code
%% or \MSC[2008] code \sep code (2000 is the default)

\end{keyword}

\end{frontmatter}

\section{Introduction}

Let $w \in A^{\nats} $ be an infinite word with values in a finite
alphabet $A.$  The {\it (factor) complexity function} $p:\nats
\rightarrow \nats$ assigns to each $n$ the number of distinct
factors of $w $ of length $n.$ A fundamental result of Hedlund and
Morse \cite{MoHe1} states that a word $w $ is ultimately periodic
if and only if for some $n$ the complexity $p(n)\leq n.$ Infinite words
of complexity $p(n)=n+1$ are called {\it Sturmian words.} The most
studied Sturmian word is the so-called Fibonacci word
\[01001010010010100101001001010010\ldots\]
fixed by the morphism $0\mapsto 01$ and $1\mapsto 0.$ In \cite
{MoHe2} Hedlund and Morse showed that each Sturmian word may be
realized geometrically by an irrational rotation on the circle.
More precisely, every Sturmian word is obtained by coding the
symbolic orbit of a point $x$ on the circle (of circumference one)
under a rotation by an irrational angle $\alpha $ where the circle
is partitioned into two complementary intervals, one of length
$\alpha $ and the other of length $1-\alpha .$ And conversely each
such coding gives rise to a Sturmian word. The irrational $\alpha
$ is called the {\it slope} of the Sturmian word. An alternative
characterization using continued fractions was given by Rauzy in
\cite{Ra1} and \cite{Ra2}, and later by Arnoux and Rauzy in \cite
{ArRa}. Sturmian words admit various other types of
characterizations of geometric and combinatorial nature (see for
instance \cite{lothaire}). For example they are characterized by
the following \emph{balance} property: A word $w $ is Sturmian if
and only if $w $ is a binary aperiodic (non-ultimately periodic)
word and $\left||u|_i-|v|_i\right|\leq 1$ for all factors $u$ and
$v$ of $w $ of equal length, and for each letter $i.$ Here $|u|_i$
denotes the number of occurrences of $i$ in $u.$\\

In this paper we develop and study two abelian analogues of the notion
of return word and apply it to characterize Sturmian words. Return
words constitute a powerful tool for studying various problems in
combinatorics on words, symbolic dynamical systems and number
theory. Given a factor $v$ of an infinite word $w,$ by a {\it return word}
to $v$ (in $w)$ we mean a factor $u$ of $w$ such that $uv$ is a factor of $w$
beginning and ending in $v$ and having no other (internal)
occurrence of $v.$ In other words the set of all return words to $v$
is the set of all distinct words beginning with an occurrence of $v$
and ending just before the next occurrence of $v$. The notion of
return words can be regarded as a discrete analogue of  the first
return map in dynamical systems. Many developments of the notion of
return words have been given: For example, return words are  used to
characterize primitive substitutive sequences \cite {durand, hz}.
Return words are used in  studying the transcendence of Sturmian or
morphic continued fractions \cite {adqz}. Return words were
fruitfully studied in the context of interval exchange
transformations (see \cite{vuillon1}). Words having a constant
number of return
words were considered in \cite{BPS}. %Words with constant number of returns
%and returns in words constructed by interval exchange
%transformations were studied in \cite{BPS, vuillon1}.
 In \cite {fv} a generalization of the notion of balanced property
 for Sturmian words was introduced and the proof is based on return words.
 Return words are also used to characterize periodicity and Sturmian words.
 The following characterization was obtained by L. Vuillon in \cite {vuillon}:

\begin{theorem}  \label{returns}  {\rm \cite {vuillon}} A binary recurrent infinite word $w$ is Sturmian if and only if each factor $u$ of $w$  has
two returns in $w.$ \end{theorem}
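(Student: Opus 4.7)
The plan is to prove the two implications separately.

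For the forward direction (Sturmian implies two returns per factor), I use the rotation coding from \cite{MoHe2}: a Sturmian word of slope $\alpha$ codes the orbit of a point on the unit circle under a rotation $R_\alpha$ with respect to a partition of the circle into arcs of lengths $\alpha$ and $1-\alpha$. Each factor $u$ of length $n$ corresponds to a sub-arc $J_u$ obtained as an intersection of $n$ pullbacks of the two partition pieces, and the return words to $u$ are in bijection with the distinct first-return times of $R_\alpha$ to $J_u$. A brief check shows that the two partition points lie in a single $R_\alpha$-orbit, and hence so do both endpoints of every $J_u$; a classical three-gap / interval-exchange argument then shows that the first-return map to $J_u$ is an exchange of \emph{exactly two} sub-intervals, yielding exactly two return words.

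For the backward direction, the goal is to show that the factor complexity satisfies $p(n) = n+1$ for all $n$. The lower bound $p(n) \geq n+1$ follows from Morse--Hedlund once $w$ is known to be aperiodic, which itself follows because an ultimately periodic tail would collapse some factor to a single return word; and the uniform boundedness of the two return-word lengths per factor forces $w$ to be uniformly recurrent. The real work is to establish $p(n+1) - p(n) \leq 1$, that is, at most one right-special factor of each length. I would argue this by contradiction: assuming two distinct factors $u_1, u_2$ of length $n$ are both right-special (extended by both $0$ and by $1$), I would construct a sufficiently long factor whose occurrences pass through $u_1$ and $u_2$ in distinguishable ways and whose return set must therefore contain at least three elements, violating the hypothesis.

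The principal obstacle is this last step, namely converting the global two-returns condition into the local bound on right-special factors. I expect to handle it by induction on $n$, relating the return words of a factor $u$ to those of its one-letter extensions $u0$ and $u1$, and exploiting the binary alphabet to keep the case analysis finite. Once $p(n+1) - p(n) \leq 1$ is in hand, combining with $p(n) \geq n+1$ and $p(1) = 2$ gives $p(n) = n+1$, so $w$ is Sturmian.
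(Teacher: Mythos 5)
This theorem is not proved in the paper at all: it is Vuillon's result, quoted from \cite{vuillon} (with simplified proofs in \cite{jv}), so there is no in-paper argument to compare against. Judged on its own terms, your forward direction is a legitimate and standard route: the cylinder of a length-$n$ factor is one of the $n+1$ arcs cut out by the points $-k\alpha \bmod 1$, $0\le k\le n$, its endpoints lie in a single orbit of $R_\alpha$, and the induced map on such an arc is an exchange of exactly two subintervals, whose two (distinct) return times give exactly two return words. That sketch is correct, though you should at least record why distinct points of $J_u$ with the same continuity interval of the induced map yield the same return word, and why the two return times differ.

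The converse, however, contains a genuine gap, and it sits exactly where the whole difficulty of Vuillon's theorem lies. You reduce to showing that two distinct right-special factors $u_1,u_2$ of the same length force some factor to have at least three return words, and then say you ``would'' construct such a factor and ``expect'' to handle it by induction on one-letter extensions. No such construction is given, and it is not routine: the only cheap inequalities available — that the number of return words of $u$ is at least the number of left extensions of $u$ and at least the number of right extensions of $u$ — are capped at $2$ on a binary alphabet, so no extension-counting argument can ever produce a third return word. The known proofs get around this by an essentially different mechanism: Vuillon passes through the balance property and the arithmetic structure of standard/bispecial factors, while Justin--Vuillon analyze the derived word with respect to a well-chosen factor (typically built from the longest common suffix of $u_1$ and $u_2$, which is right special with two right-special left extensions). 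Until you exhibit a concrete factor and exhibit three pairwise distinct return words for it, the backward implication is unproven, and with it the theorem.
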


%A word is Sturmian if and only if each of its factors
%has two returns.
\noindent In \cite {jv} the proofs were simplified and
return words were studied in the context of episturmian words.

\smallskip

Two words are said to be {\it abelian equivalent} if they are permutations of each
other. It is readily verified that this defines an equivalence relation on the set of all factors of an infinite word. Various abelian properties of words have been extensively investigated including abelian powers and their avoidance, abelian complexity and abelian
periods \cite {af, akp, dekking, keranen, rsz}. Given a factor $u$ of an infinite word $w,$  let
$n_1<n_2<n_3<\dots$ be all integers $n_i$ such that $w_{n_i}\dots
w_{n_{i}+|u|-1}$ is abelian equivalent $u.$ Then we call each  $w_{n_i}\dots
w_{n_{i}+|u|-1}$ a \emph{semi-abelian return} to $u.$
By an \emph{abelian return} to $u$  we mean an  abelian class of  $w_{n_i}\dots w_{n_{i+1}-1}.$ We note that in both cases these definitions depend only on the abelian class of $u.$
Each of these notions of abelian returns gives rise to a new
characterization of Sturmian words:

%The main result of this paper is the following characterization of
%Sturmian words:

\begin{theorem} \label {main} A binary recurrent infinite word $w$ is Sturmian if and only
if each factor $u$ of $w$  has two or three abelian returns in $w.$
\end{theorem}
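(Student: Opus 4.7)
The plan is to prove each direction of the equivalence separately, leveraging the rotational structure of Sturmian words.

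\emph{Forward direction.} Let $w$ be Sturmian of slope $\alpha$, realized as a coding of the orbit $\{i\alpha + x_0 \bmod 1\}_{i \ge 0}$ under rotation by $\alpha$. Fix a factor $u$ of $w$. By the balance property, length-$|u|$ factors of $w$ fall into at most two abelian classes, and the starting positions of factors abelian equivalent to $u$ correspond precisely to the indices $i$ for which $\{i\alpha + x_0\}$ belongs to a sub-interval $I_u \subset [0,1)$. Thus the abelian returns to $u$ are controlled by the first-return map of the rotation to $I_u$. By the classical result that this first-return map is an interval exchange on two or three sub-intervals, there are at most three distinct return times, and careful bookkeeping of the Parikh vectors of the return words on each sub-interval (using balance again for each return length) yields a total of 2 or 3 distinct abelian return classes; the lower bound of 2 follows from the aperiodicity of $w$.

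\emph{Backward direction.} Assume $w$ is binary, recurrent, and every factor has 2 or 3 abelian returns. I would show that $w$ is both aperiodic and balanced; together with being binary and recurrent, this forces $w$ to be Sturmian by the standard balance characterization. For aperiodicity: if $w$ were periodic of minimal period $p$, one exhibits a factor $u$ of length $p$ whose abelian class is represented by only a single cyclic shift of the period, so that all abelian-equivalent occurrences of $u$ sit at the same residue modulo $p$ and yield a single Parikh class of return words, contradicting the hypothesis. For balance: if $w$ is aperiodic but unbalanced, pick factors $x, y$ of equal length with $\big| |x|_1 - |y|_1 \big| \ge 2$, and construct a factor $u$ (e.g., of length related to the threshold at which balance first fails) whose abelian returns produce at least four distinct Parikh vectors via the recurrence of $w$.

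\emph{Main obstacle.} The principal difficulty lies in the balance step of the backward direction: quantitatively turning a failure of balance into the existence of a factor with at least four abelian return classes requires careful Parikh-vector bookkeeping and a judicious choice of $u$. A plausible strategy is to pick $u$ at the minimal length where balance fails and extract the four Parikh classes from the unbalanced pair combined with the recurrence of $w$; alternatively, one could try to reduce to Vuillon's Theorem~\ref{returns} by first showing that the hypothesis forces each factor to have exactly two plain return words, though this reduction is itself delicate since abelian returns aggregate a strictly larger set of occurrences than plain returns.
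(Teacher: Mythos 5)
Your backward direction contains one step that fails outright and one step that is only a statement of intent, and the backward direction is where essentially all of the content of this theorem lies. The aperiodicity argument is wrong: if $w$ has minimal period $p$, then all $p$ cyclic shifts of the period word have the same Parikh vector, so the abelian class of \emph{any} factor of length $p$ contains \emph{every} factor of length $p$; its occurrences are all positions, and its abelian returns are exactly the two letters --- perfectly consistent with the hypothesis. There is no length-$p$ factor ``represented by only a single cyclic shift.'' Indeed, the paper exhibits a periodic word of period $24$ in which every factor has at least two abelian returns, so periodicity cannot be excluded in the direct way you propose; the paper instead proves balance \emph{first} and then invokes Lemma~\ref{1-balanced} (a recurrent balanced binary word that is not Sturmian is periodic and contains a singular factor $aBa$ with a unique return, hence a unique semi-abelian and abelian return, contradicting the lower bound of two). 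Your order of steps must be reversed. As for the balance step, you correctly flag it as the main obstacle but supply no argument; in the paper this is Proposition~\ref{sufficiency}, established through a long chain (Lemma~\ref{isolated}: one letter is isolated; Lemma~\ref{l1-l2} and Corollary~\ref{w}: $w\in\{0^{l}1,0^{l+1}1\}^{\omega}$; Lemma~\ref{2-balanced}: $w$ is $2$-balanced; then exclusion of a third abelian class) that occupies most of Section~5. Nothing in your sketch substitutes for it, so the sufficiency direction is not proved.

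Your forward direction takes a genuinely different route from the paper's (which works with the Jenkinson--Zamboni lexicographic array of a standard factor rather than the rotation directly), and it is viable, but the decisive step is asserted rather than proved: three return times for the rotation induced on $I_u$ do not by themselves give three abelian return classes, because factors of a fixed length $t$ fall into two abelian classes, so a priori each return time could contribute two Parikh vectors (up to six in total). One must check that on each return-time subinterval $J$ with return time $t$ the number of $1$'s in the return word is constant; this holds because the discontinuity $\{-t\alpha\}$ of that count cannot lie in the interior of $J$ --- otherwise $0$ would lie in the interior of $R^{t}(J)\subseteq I_u$, whereas $0$ is an endpoint of $I_u$. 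With that verification added, your forward direction is correct, arguably cleaner than the paper's, and has the bonus of bounding the number of \emph{semi-abelian} returns as well, which is what Theorem~\ref{semi-abelian} needs.
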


%We prove this theorem in the following two sections. The necessity
%of the condition on the number of abelian returns is proved in
%Section 5, Proposition \ref{necessity}; the sufficiency is proved in
%Section 6, Proposition \ref {sufficiency}. We also establish some
%properties of abelian returns of Sturmian words, e. g., we show that
%a factor of a Sturmian word has two abelian returns if and only if
%it is singular (Section 5, Theorem \ref {singular}).

Surprisingly, Sturmian words admit
exactly the same characterization in terms of semi-abelian returns:

\begin{theorem}\label{semi-abelian} A binary recurrent infinite word $w$  is Sturmian if and only
if each factor $u$ of $w$  has two or three semi-abelian returns in $w$.
\end{theorem}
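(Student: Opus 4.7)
The plan is to prove the two implications of Theorem~\ref{semi-abelian} separately: the easier direction $(\Leftarrow)$ reduces to Theorem~\ref{main}, while the direction $(\Rightarrow)$ uses the rotation (circle) representation of Sturmian words.

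For $(\Leftarrow)$, assume every factor $u$ of $w$ has two or three semi-abelian returns. Since each abelian return is the abelian equivalence class of one or more semi-abelian returns, the number of abelian returns is bounded by the number of semi-abelian returns, hence is at most $3$. To invoke Theorem~\ref{main} it suffices to exclude the possibility that some factor has a single abelian return. If $u$ had only one abelian return then all gap-words between consecutive abelian occurrences of $u$ would be abelian equivalent, and hence of a common length $\ell$; the abelian occurrences of $u$ would then lie in an arithmetic progression of common difference $\ell$, and $w$ would factor (after a prefix) as a concatenation of at most three distinct length-$\ell$ blocks in the same abelian class. Examining a suitable extension of $u$ forces either $w$ to be ultimately periodic---whence some longer factor has only one semi-abelian return, contradicting the hypothesis---or produces a factor with four or more semi-abelian returns. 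Hence every factor has two or three abelian returns, and Theorem~\ref{main} yields that $w$ is Sturmian.

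For $(\Rightarrow)$, let $w$ be Sturmian with slope $\alpha$, realized as the coding of the orbit of a point $x \in \mathbb{T} := \reals/\ints$ under the rotation $Ty = y + \alpha \pmod{1}$ with respect to the partition $I_0 = [0, 1-\alpha)$, $I_1 = [1-\alpha, 1)$. For a factor $u$ of length $n$, the balance property forces $|u|_1 \in \{\lfloor n\alpha \rfloor, \lceil n\alpha \rceil\}$, and the set of starting indices $i$ such that $w_i \ldots w_{i+n-1}$ is abelian equivalent to $u$ equals $\{i \geq 0 : T^i x \in J\}$ for an interval $J \subset \mathbb{T}$ whose endpoints lie in the orbit $\{T^{-t}(0) : 0 \leq t \leq n\}$ of $0$ under $T^{-1}$. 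By the classical three-distance theorem, the first-return map of $T$ to $J$ is a 2- or 3-interval exchange transformation, partitioning $J$ into intervals $J_1, \ldots, J_k$ with $k \in \{2, 3\}$ and constant return times $\ell_j$ on each $J_j$. The key step is to prove that on each $J_j$ the length-$\ell_j$ Sturmian coding of the orbit is constant, so that each $J_j$ produces exactly one semi-abelian return; the total number of distinct semi-abelian returns is then $k \in \{2, 3\}$.

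The main obstacle is precisely this coding-constancy claim. The length-$\ell_j$ factor partition of $\mathbb{T}$ has $\ell_j + 1$ cells bounded by $\{T^{-t}(0) : 0 \leq t \leq \ell_j\}$, and one must show that none of these boundary points lies in the interior of any $J_j$. The strategy is to match the $k-1$ interior boundary points of the first-return partition of $J$ against the preimages $T^{-t}(0)$ that enter $J$ at times $0 < t \leq \ell_j$, using crucially that the endpoints of $J$ are themselves preimages of $0$. Once this alignment between the return-map partition of $J$ and the Sturmian length-$\ell_j$ factor partition is established, each $J_j$ lies inside a single length-$\ell_j$ cylinder, coding-constancy follows, and the forward direction is complete.
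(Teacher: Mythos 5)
Both directions of your proposal contain genuine gaps, and in both cases the paper takes a different (and complete) route.

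For $(\Leftarrow)$: reducing to Theorem~\ref{main} requires you to rule out a factor with exactly one abelian return, and your sketch of that step does not work as stated. The dichotomy ``either $w$ is ultimately periodic, whence some longer factor has only one semi-abelian return, or some factor has four or more semi-abelian returns'' is unjustified, and its first horn invokes a principle that is false in general: the paper exhibits a periodic word of period $24$ (equation~\eqref{24}) in which \emph{every} factor has at least two abelian, hence at least two semi-abelian, returns. Likewise, the aperiodic words in $\{110010,110100\}^{\omega}$ show that a single abelian return is compatible with aperiodicity, so the whole ``one abelian return'' configuration is delicate and needs a real argument, not an ``examining a suitable extension'' placeholder. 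The paper sidesteps this entirely: Proposition~\ref{sufficiency} and Lemma~\ref{1-balanced} are deliberately stated under the hypothesis ``at most three abelian returns and at least two semi-abelian returns,'' which follows immediately from ``two or three semi-abelian returns,'' so no lower bound on the number of abelian returns is ever needed.

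For $(\Rightarrow)$: you correctly identify the coding-constancy claim (each first-return subinterval $J_j$ of $J$ lies in a single length-$\ell_j$ cylinder) as the crux, but you only describe a ``strategy'' for it; as written this direction is not a proof. The claim is true, but establishing the alignment between the three-distance return partition of $J$ and the Sturmian cylinder partition is essentially where all the work lies, and it is not a formality. The paper's argument is combinatorial and much shorter: by Proposition~\ref{necessity} a factor $v$ has at most three abelian returns, so four or more semi-abelian returns would force two abelian-equivalent but distinct semi-abelian returns; by Proposition~\ref{aBb} these are Christoffel words, hence of the form $0B1$ and $1B0$ for a bispecial $B$; and a short case analysis on $|v|$ versus $|0B1|$ then contradicts the balance property (or forces $B$ to carry both letters at one position). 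If you want to keep the dynamical approach, you must actually prove that the endpoints of the $J_j$ exhaust the preimages $T^{-t}(0)$ falling in $J$ up to time $\ell_j$; otherwise the count of distinct gap words could a priori exceed $k$.
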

%We prove that a word is Sturmian if and only if each of its factors
%has two or three
%abelian returns. %The same characterization holds for semi-abelian returns.

Although the above characterizations of Sturmian words are similar to the one given in Theorem
\ref {returns}, our methods differ considerably from those used in
\cite {jv, vuillon}.

\smallskip

The paper is organized as follows:  Section 2 is devoted to providing the necessary background and terminology relevant to the subsequent sections. In Section 3 we investigate
connections between abelian returns and periodicity. %In Section 4,
%we state our main result characterizing  Sturmian words.
In Section 4 we study the structure of abelian returns in Sturmian
words. We prove that every factor of a Sturmian word has two or
three abelian returns (Proposition \ref{necessity}) and moreover, a
factor has two abelian returns if and only if it is singular
(Theorem \ref{singular}). In Section 5 we prove the sufficiency of
the condition on the number of abelian returns for a word to be
Sturmian (Corollary \ref {corollary_suf}). In Section 6 we prove Theorem \ref {semi-abelian}. %that the same characterization of
%Sturmian words as for abelian returns holds for semi-abelian
%returns.

\section{Preliminaries}

\subsection{Sturmian words and return words}

We begin by presenting some background on Sturmian words  and return words
and terminology which will be used later in the paper.

Given a finite non-empty set $\Sigma$ (called the alphabet), we
denote by $\Sigma^*$ and $\Sigma^{\omega}$,
respectively, the set of finite words and the set of (right) infinite
words over the alphabet $\Sigma$.
 A word $v$ is a \emph{factor} (resp. a \emph{prefix}, resp. a \emph{suffix}) of a word $w$, if there exist words $x$, $y$ such that $w=xvy$ (resp. $w=vy$, resp. $w=xv$).  The set of factors of a finite or infinite word $w$ is denoted by $F(w)$. Given a finite word $u = u_1 u_2 \dots u_n$ with $n \geq 1$ and
$u_i \in \Sigma$, we denote the length $n$ of $u$ by $|u|$. The
empty word will be denoted by $\varepsilon$ and we set
$|\varepsilon| = 0$. For each $a \in \Sigma$, we let $|u|_a$
denote the number of occurrences of the letter $a$ in $u$.
An infinite word $w$ is said to be \emph{$k$-balanced} if and only if
$||u|_a-|v|_a|\leq k$ for all factors $u,v$ of $w$ of equal length and all letters $a\in \Sigma.$
If $w$ is $1$-balanced, then we say that $w$ is {\it balanced}.

Two words
$u$ and $v$ in $\Sigma^*$ are said to be \emph{abelian equivalent}, denoted $u \sim_{ab} v$,
if and only if $|u|_a = |v|_a$ for all $a
\in \Sigma$. It is easy to see that abelian equivalence is indeed
an equivalence relation on $\Sigma^*$.

We say that a (finite or infinite) word $w$ is \emph{periodic}, if
there exists $T$ such that $w_{n+T}=w_n$ for every $n$. A right
infinite word $w$ is \emph{ultimately periodic} if there exist
$T$, $n_0$ such that $w_{n+T}=w_n$ for every $n\geq n_0$. A word
$w$ is \emph{aperiodic}, if it is not (ultimately) periodic. A
factor $u$ of $w$ is called \emph{right special} if both $ua$ and
$ub$ are factors of $w$ for some pair of distinct letters $a, b
\in \Sigma$. Similarly $u$ is called \emph{left special} if both
$au$ and $bu$ are factors of $w$ for some pair of distinct letters
$a, b \in \Sigma$. The factor $u$ is called \emph{bispecial} if it
is both right special and left special.

Sturmian words can be defined in many different ways. For example,
they are infinite words having the smallest factor complexity
among aperiodic words. %The \emph{factor complexity} of a word is the
%function $p(n)$ defined as the number of its factors of length
%$n$.
By a celebrated result due to Hedlund and Morse \cite {MoHe1}, a
word is ultimately periodic if and only if its factor complexity
$p(n)$ is uniformly bounded. In particular, $p(n) < n$ for all $n$
sufficiently large. Sturmian words are exactly words whose factor
complexity $p(n) = n + 1$ for all $n \geq 0$. Thus, Sturmian words
are those aperiodic words having the lowest complexity. Since
$p(1) = 2$, it follows that Sturmian words are binary words. In
what follows, we denote the letters of a Sturmian word by $0$ and
$1$.

The condition $p(n) = n+1$ implies the existence of exactly one
right special and one left special factor of each length. The set
of factors of a Sturmian word is closed under reversal, so for
every length the right special factor is a reversed left special
factor, and bispecial factors are palindromes. Bispecial factors play a crucial role in Sturmian words.  \emph{Standard factors} of a Stumian word $w$ are letters and factors of the form $Bab$, where $a\neq b \in \{0, 1\}$ and $B$ is a bispecial factor of $w.$  A factor of a Sturmian word is called
\emph{singular} if it is the only factor in its abelian class.
It is well known that singular factors have the form $aBa$, where $a$ is a
letter and $B$ a bispecial factor. We will also use the notion of \emph{Christoffel word}. One of the ways to define Christoffel words is the following: they are factors of a Sturmian word of the form $aBb$ and letters.

%Another characterization of Sturmian words is based on balance
%property. An
%infinite word is called \emph{balanced}, if for every two factors
%$u$, $v$ of the same length $||u|_1-|v|_1|\leq 1$. Sturmian words can be defined as binary aperiodic balanced words.

In \cite {MoHe2} it is shown that each Sturmian
word may be realized measure-theoretically by an irrational rotation on the circle. That is, every
Sturmian word is obtained by coding the symbolic orbit of a point $x$ on the circle (of circumference
one) under a rotation by an irrational angle $\alpha$, $0 < \alpha < 1$, where the circle is partitioned into
two complementary intervals, one of length $\alpha$ and the other of length $1-\alpha$. And conversely
each such coding gives rise to a Sturmian word. The quantity $\alpha$ gives the frequency of letter $1$ in the Sturmian word defined by such rotation. Other widely used
characterizations are via mechanical words, cutting
sequences, Sturmian morphisms etc., see \cite {lothaire} for
further detail.

\medskip

Let $w=w_1w_2\dots$ be an infinite word. The word $w$ is
\emph{recurrent} if each of its factors occurs infinitely many times
in $w.$ In this case, for $u\in F(w)$, let $n_1<n_2<\dots$ be all
integers $n_i$ such that $u=w_{n_i}\dots w_{n_{i}+|u|-1}$. Then the
word $w_{n_i}\dots w_{n_{i+1}-1}$ is a \emph{return word} (or
briefly \emph{return}) of $u$ in $w$. An infinite word \emph{has $k$
returns}, if each of its factors has $k$ returns. The following
characterization of Sturmian words via return words was established
in \cite {vuillon}: A word is Sturmian if and only if each of its
factors has two returns (Theorem 1 in the Introduction).

%\begin{theorem}  \label{returns}  {\rm \cite {vuillon}} A recurrent infinite word has
%two returns if and only if it is Sturmian. \end{theorem}

Also there exists a simple characterization of periodicity via return words:

\begin{proposition} \label{per} {\rm \cite {vuillon}} A recurrent infinite word is ultimately periodic if and only if there exists a factor having exactly one return word. \end{proposition}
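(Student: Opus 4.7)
The plan is to prove both implications directly from the definition of a return word. For the sufficient direction ($\Leftarrow$), suppose some factor $u$ has a unique return word $r$. Let $n_1 < n_2 < n_3 < \cdots$ enumerate the starting positions of $u$ in $w$. By assumption each return word $w_{n_i}\cdots w_{n_{i+1}-1}$ equals $r$, so in particular $n_{i+1}-n_i = |r|$ for every $i$. Concatenating these consecutive blocks, the suffix $w_{n_1}w_{n_1+1}w_{n_1+2}\cdots$ of $w$ coincides with $rrrr\cdots$, which shows that $w$ is ultimately periodic of period $|r|$.

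For the necessary direction ($\Rightarrow$), I would first reduce to the purely periodic case. A recurrent ultimately periodic word is in fact periodic in the paper's sense: by recurrence every prefix of $w$ must reoccur as a factor inside the ultimately periodic tail, which forces the initial transient to coincide with the periodic pattern. So write $w$ as a right-infinite concatenation of copies of a primitive word $v$ of length $T$, where $T$ is the minimal period. I then claim that the factor $u = w_1 w_2 \cdots w_T$ (or indeed any factor of length at least $T$) has exactly one return word. Indeed, if $u$ occurs at two positions $m<n$, then the length-$T$ blocks of $w$ starting at $m$ and at $n$ must agree, and these are two rotations of the primitive word $v$; since the rotations of a primitive word are pairwise distinct, we deduce $m \equiv n \pmod T$. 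Hence the occurrences of $u$ form a single arithmetic progression of common difference $T$, and every return word equals the same length-$T$ conjugate of $v$, proving uniqueness.

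The $(\Leftarrow)$ direction is essentially immediate from the definition. The main technical point in the forward direction is the standard reduction ``recurrent plus ultimately periodic implies purely periodic'' and the primitivity argument that pins the occurrences of a sufficiently long factor into a single residue class modulo $T$; once these are in place the production of a factor with a unique return word is automatic.
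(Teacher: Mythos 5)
Your proof is correct. Note that the paper itself gives no proof of this proposition: it is quoted from Vuillon's article \cite{vuillon} as a known fact, so there is no internal argument to compare yours against. Both of your directions are sound: the backward implication is indeed immediate, since recurrence guarantees infinitely many occurrences of $u$ and the unique return word $r$ then tiles the suffix of $w$ starting at the first occurrence as $rrr\cdots$. In the forward direction, the two auxiliary facts you invoke are exactly the right ones and are correctly deployed: a recurrent ultimately periodic word is purely periodic (because a long prefix reoccurs inside the periodic tail, which propagates the relation $w_i=w_{i+T}$ back to $i=1$), and the word of minimal period length $T$ is primitive, so its $T$ cyclic rotations are pairwise distinct and any occurrence of the length-$T$ prefix is pinned to a single residue class modulo $T$. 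This forces the occurrences of $w_1\cdots w_T$ to be exactly the arithmetic progression $1, 1+T, 1+2T,\ldots$, giving a single return word.
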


\subsection{Abelian and semi-abelian returns}

In this subsection we define the basic notions for the abelian
case. In particular, we introduce two abelian versions of the
notion of return word, abelian return and semi-abelian return.

For an infinite recurrent word $w$ and for $u\in F(w)$, let
$n_1<n_2<n_3<\dots$ be all integers $n_i$ such that $w_{n_i}\dots
w_{n_{i}+|u|-1}\sim_{ab} u$. Then each $w_{n_i}\dots
w_{n_{i}+|u|-1}$ is called a {\it semi-abelian return} to the
abelian class of $u.$ By an \emph{abelian return} %word} (or briefly \emph{abelian return})
to the abelian class of $u$ we mean an abelian class of
$w_{n_i}\dots w_{n_{i+1}-1}.$  So the number of abelian returns is
the number of distinct abelian classes of semi-abelian returns.
Hence for every factor $u$ in an infinite word $w$ the number of
abelian returns to the abelian class of $u$ is less or equal to
the number of semi-abelian returns to the abelian class of $u.$
For brevity in the further text we often say (semi-)abelian return
to factor $u$ meaning the abelian class of $u$. We will often
denote abelian returns by an element from the abelian equivalence
class, that is by a semi-abelian return from the class.

\begin{example}\label{example1} Consider the Thue-Morse word $$t=0110100110010110\dots$$
 fixed by the morphism $\mu$: $\mu (0)=01$, $\mu
(1)=10$. The abelian class of $01$ consists of two words $01$ and
$10$. Consider an occurrence of $01$ starting at position  $i$,
i.e., $t_i=0$, $t_{i+1}=1$. It can be followed by either $0$ or
$10$, i.e. we have either $t_{i+2}=0$ or $t_{i+2}=1$, $t_{i+3}=0$.
In the first case we have $t_{i+1}t_{i+2}=10$, which is abelian equivalent to $01$, and hence we have the semi-abelian return $t_{i}=0$. %$t_{i+2}=0$.
In the second case $t_{i+1}t_{i+2}=11$, which is not abelian equivalent to $01$, so we consider the next factor $t_{i+2}t_{i+3}=10\sim_{ab}01$, which gives the semi-abelian return $t_{i}t_{i+1}=01$. %$t_{i+2}t_{i+3}=10$.
Symmetrically, $10$ gives semi-abelian returns $1$ and $10$. So the abelian class of $01$ has four semi-abelian returns: $\{0, 1, 01, 10\}$ and three abelian returns since $01 \sim_{ab}10.$ \end{example}

For our considerations we will use the following definitions. We
say that a letter $a$ is \emph{isolated} in a word
$w\in\Sigma^{\omega}$, if $aa$ is not a factor of $w$. A letter
$a\in\Sigma$ \emph{appears in} $w$ \emph{in a block of length}
$k>0$, if a word $b a^k c$ is factor of $w$ for some letters
$b\neq a$, $c\neq a$.

\medskip

In this paper we establish a new characterization of Sturmian
words analogous to Theorem \ref {returns}. Namely, we prove that a
recurrent infinite word is Sturmian
 if and only if each of its factors has two or three abelian returns (see Theorem \ref{main} in the Introduction).
 On the other hand, contrary to property of being Sturmian,
 abelian returns do not give a simple characterization of periodicity analogous to Proposition \ref {per}.
 In terms of semi-abelian returns Sturmian words have
exactly the same characterization as in terms of abelian returns
(see Theorem \ref{semi-abelian} in Introduction).

\section{Abelian returns and periodicity}

In this section we discuss relations between periodicity and
numbers of abelian and semi-abelian returns. We begin by proving a
simple sufficient condition for periodicity:

\begin{lemma} \label {periodic} Let $|\Sigma|=k.$ If each factor of a recurrent infinite word over the alphabet $\Sigma$
has at most $k$ abelian returns, then the word is
 periodic. \end{lemma}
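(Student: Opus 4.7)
My plan is to prove the contrapositive: assuming $w$ is a recurrent aperiodic word over $\Sigma$ with $|\Sigma|=k$, I would exhibit a factor of $w$ having more than $k$ abelian returns. The natural starting point is to analyze the abelian returns to a single letter $a\in\Sigma$: they are parameterized by the Parikh vectors over $\Sigma\setminus\{a\}$ of the ``gaps'' between consecutive occurrences of $a$, and the hypothesis means these gap-vectors take at most $k$ distinct values. Thus $w$ admits a block decomposition in which the letters between consecutive occurrences of $a$ lie in one of at most $k$ abelian classes.

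The base case $|\Sigma|=1$ is immediate, since the only recurrent word is $a^\omega$. For $|\Sigma|=2$ I would carry out a direct case analysis. If neither $00$ nor $11$ occurs as a factor, then $w\in\{(01)^\omega,(10)^\omega\}$. If both occur, each letter already achieves the gap-vector $0$, so only one other gap value is allowed; this pins both the $0$-block and $1$-block lengths to constants and yields $w=(0^{l_0}1^{l_1})^\omega$. In the remaining case (say only $00$ occurs) the $1$'s are isolated, and combining the at-most-two-returns condition on the letter $1$ with a carefully chosen longer test factor forces all $0$-block lengths to a single value, giving $w=(0^l 1)^\omega$. In every sub-case $w$ is periodic.

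For general $k$ the plan is induction on $|\Sigma|$. Pick a letter $a$ with smallest number $m\le k$ of abelian returns and encode $w$ as a word $w'=\tau_1\tau_2\cdots$ over an $m$-letter alphabet, where $\tau_i\in\{1,\ldots,m\}$ is the abelian class of the $i$-th gap block. If $w'$ can be shown to inherit a matching abelian-return bound, the inductive hypothesis yields $w'$ periodic. Periodicity of $w'$ would then be lifted to $w$ by invoking the abelian-return constraints on longer factors of $w$ that span several gap blocks, forcing consistent orderings within each abelian class.

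The main obstacle is precisely this inductive step: the abelian-return bound does not descend to $w'$ automatically, because abelian occurrences of a factor of $w$ may start mid-block and produce ``hybrid'' returns with no clean counterpart in $w'$. Likewise, lifting periodicity from $w'$ back to $w$ is non-trivial, since a single abelian class of gap block typically admits several concrete orderings. As a sanity check supporting this plan, a toy example over $\{a,b,c\}$ with gap blocks $B_i\in\{bc,cb\}$ shows that when the sequence of $B_i$ is aperiodic, the factor with Parikh vector $(1,1,1)$ already acquires four distinct abelian returns---exceeding the allowed $k=3$---which is exactly the type of counting the general argument must reproduce.
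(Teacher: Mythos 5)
Your proposal is a plan rather than a proof, and the plan has a gap that you yourself flag as unresolved: the inductive step requires the abelian-return bound on $w$ to descend to the derived word $w'$ over the gap-block alphabet, and you concede that this ``does not descend automatically'' because abelian occurrences can start mid-block; you likewise leave open how to lift periodicity of $w'$ back to $w$. Even the base case $k=2$ is not complete as written: in the sub-case where the $1$'s are isolated you are reduced to a word in $\{0^{l_1}1,0^{l_2}1\}^{\omega}$ and assert that ``a carefully chosen longer test factor'' forces $l_1=l_2$ or periodicity, without exhibiting it. That step is genuinely delicate --- aperiodic words of exactly this shape (Sturmian words) have all factors with at most three abelian returns, so one really must produce a factor with three returns from aperiodicity alone, and nothing in your sketch does this. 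So as it stands the argument would not compile into a proof for any $k\geq 2$.

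The lemma has a short direct proof that bypasses all of this, and your toy example with Parikh vector $(1,1,1)$ is actually pointing straight at it. Pick a factor $v$ of $w$ containing all $k$ letters, and let $m<n$ be two positions at which $v$ occurs; set $T=n-m$ and $u=w_m\cdots w_{n-1}$. Since $w_{m+s}=w_{n+s}$ for $0\leq s<|v|$, sliding the length-$T$ window one step to the right from position $m+s$ to $m+s+1$ removes the letter $w_{m+s}$ and appends the equal letter $w_{m+s+T}$, so the factors of length $T$ starting at positions $m,m+1,\ldots,m+|v|$ are all abelian equivalent to $u$. Consecutive occurrences at adjacent positions give single letters as semi-abelian returns, and the letters so obtained are exactly the letters of $v$, i.e.\ all $k$ of them. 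By hypothesis the class of $u$ has at most $k$ abelian returns, so \emph{every} abelian return to $u$ is a single letter; hence consecutive occurrences of the class of $u$ are always at adjacent positions, which forces $w_j=w_{j+T}$ for all $j\geq m$, and $w$ is periodic with period $T$. No induction on the alphabet size, no block recoding, and no case analysis is needed.
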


\noindent\emph{Proof.}  Let $w$ be a recurrent word over a $k$-letter alphabet, and let
$v$ be a factor of $w$ containing
all letters from the alphabet. Consider two occurrences of $v$ in $w$,
say in positions $m$ and $n$ (with $m<n$). Then the abelian class of $w_{m}\dots w_{n-1}$
has all letters as abelian returns, and hence no more, because every factor of $w$
must have at most $k$ abelian returns.
Thus $w$ is periodic with period $n-m$. \qed

\medskip

\noindent \textbf{Remark.} Actually, this proves something
stronger: Let $w$ be any aperiodic word over an alphabet $\Sigma$,
$|\Sigma|=k$, and let $u$ be any factor of $w$ containing $k$
distinct letters, and let $vu$ be any factor of $w$ distinct from
$u$ beginning in $u$. Then the abelian class of $v$ must have at
least $k$ abelian returns. It follows that if a word is not
periodic, then for every positive integer $N$ there exists an
abelian factor of length $>N$ having at least $k+1$ abelian
returns. In other words, the value $k+1$ must be assumed
infinitely often.

\medskip

\noindent \textbf{Remark.} Notice that the condition given by Lemma \ref {periodic} is not necessary for periodicity. It is not difficult to construct a periodic word such that some of its factors have more than $k$ abelian returns.

\bigskip

Notice also that a characterization of periodicity similar to Proposition \ref {per} in terms of abelian returns does not exist. Moreover, in the case of abelian returns it does not hold in both directions. Consider an infinite aperiodic word of the form $\{110010, 110100\}^{\omega}$. It is easy to see that the factor $11$ has one abelian return $110010\sim_{ab}110100$. %$001011\sim_{ab}010011$.
So, the existence of a factor having one abelian return does not
guarantee periodicity. The converse is not true as well: there
exist  periodic words such that each factor has at least
two abelian returns. An example is given by the following word
with period 24:
\begin{equation}\label{24}
w=(001101001011001100110011)^{\omega}.\end{equation} To
check that every factor of this word has at least two abelian
returns, one can check the factors up to the length $12$. If we
denote the period of $w$ by $u$, then every factor $v$ of length
$12<l\leq 24$ has the same abelian returns as abelian class of
words of length $24-l$ obtained from $u$ by deleting $v$. For a
factor of length longer than $24$ its abelian returns coincide
with abelian returns of part of this factor obtained by shortening
it by $u$.

\bigskip

Now we continue with relations between semi-abelian returns and
periodicity. In this connection semi-abelian returns show
intermediate properties between normal and abelian returns. E.~g.,
normal returns admit the characterization of periodicity given by
Proposition \ref {per}, for abelian returns the proposition does
not hold in both directions, and in the case of semi-abelian
returns the proposition holds in one direction giving a
sufficiency condition for periodicity:

\begin{proposition}
If a recurrent infinite word has a factor with one semi-abelian
return, then the word is periodic.
\end{proposition}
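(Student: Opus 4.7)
The plan is to translate the hypothesis directly into periodicity. Let $u$ be the factor with a unique semi-abelian return, and let $n_1<n_2<n_3<\cdots$ enumerate the starting positions of all factors of $w$ that are abelian equivalent to $u$. By assumption there is a single word $r$ such that $w_{n_i}w_{n_i+1}\cdots w_{n_{i+1}-1}=r$ for every $i\geq 1$.

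The crucial observation is that this forces the gaps $n_{i+1}-n_i$ themselves to be constant, namely equal to $d:=|r|$. Indeed, $n_{i+1}-n_i$ is the length of the $i$-th semi-abelian return, and all these lengths agree because all the return words agree. Concatenating, the tail of $w$ beginning at $n_1$ is nothing but $r^{\omega}$, so $w$ is ultimately periodic with period $d$.

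To finish, I upgrade ultimate periodicity to pure periodicity using recurrence. Let $N_0=n_1$ so that $w_{n+d}=w_n$ for every $n\geq N_0$. Take any prefix $p=w_1\cdots w_{N_0+d}$; by recurrence $p$ reappears at some position $m\geq N_0+1$. Since every position $\geq N_0$ lies in the periodic tail, the word $w_m w_{m+1}\cdots$ is $d$-periodic, and its agreement with $w_1w_2\cdots$ on the first $N_0+d$ letters shows that the preperiod itself obeys the relation $w_n=w_{n+d}$ for $1\leq n\leq N_0$. Hence $w$ is (purely) periodic with period $d$.

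There is no real obstacle here: the whole statement is essentially a one-line consequence of the observation that ``one semi-abelian return'' constrains both the identity \emph{and} the length of every factor between consecutive abelian occurrences of $u$, so the argument just has to package this observation together with the standard recurrence-upgrades-ultimate-periodicity lemma.
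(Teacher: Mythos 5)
Your argument is correct and is exactly the paper's approach: the paper's entire proof is the one-line remark that ``this unique semi-abelian return word gives the period,'' and you have simply filled in the details (constant gap length, the tail being $r^{\omega}$, and the standard recurrence argument upgrading ultimate periodicity to pure periodicity). No discrepancy to report.
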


\noindent\emph{Proof.} It is readily verified that this unique
semi-abelian return word gives the period. \qed

\bigskip

We note that this condition is not necessary for periodicity. One
can take the same example \eqref{24} of a periodic word as for
abelian returns. Since each of its factors has at least two abelian
returns, it has at least two semi-abelian returns.

Lemma \ref {periodic} holds also for semi-abelian returns (exactly
the same proof works):

\begin{lemma}
Let $|\Sigma|=k.$ If each factor of a recurrent infinite word over
the alphabet $\Sigma$ has at most $k$ semi-abelian returns, then
the word is periodic.
\end{lemma}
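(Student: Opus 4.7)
My plan is to mimic the proof of Lemma~\ref{periodic}, noting that the witness gap-words produced there are pairwise distinct as actual factors (not merely as abelian classes), so the same construction yields $k$ distinct semi-abelian returns. First I would pick any factor $v \in F(w)$ that uses every letter of $\Sigma$ (reducing $\Sigma$ to the letters that actually occur in $w$ if necessary, which only decreases $k$), then fix two occurrences of $v$ in $w$ at positions $m < n$ and set $U := w_m w_{m+1}\cdots w_{n-1}$. The target is to show that $w$ has period $n-m$.

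The crucial step is to verify that abelian copies of $U$ occur at each of the $|v|+1$ consecutive positions $m, m+1, \ldots, m+|v|$. Indeed, for $0\le j\le |v|$, the length-$(n-m)$ window $w_{m+j}\cdots w_{n+j-1}$ is obtained from $U$ by deleting the prefix $w_m\cdots w_{m+j-1}$ and appending $w_n\cdots w_{n+j-1}$; both of these added and deleted pieces equal the length-$j$ prefix of $v$ (since $v$ occurs at both positions $m$ and $n$), so they have identical Parikh vectors and therefore $w_{m+j}\cdots w_{n+j-1}\sim_{ab}U$. The corresponding semi-abelian returns along this block are precisely the single letters $w_m, w_{m+1},\ldots,w_{m+|v|-1}$, and since $v$ contains every letter of $\Sigma$ this list covers all of $\Sigma$. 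Hence $U$ already has at least $k$ pairwise distinct semi-abelian returns.

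Combining this with the hypothesis forces every semi-abelian return of $U$ to be one of these single letters; in particular, the next abelian copy of $U$ after any given one is always its immediate right neighbour. Pushing this forward inductively, an abelian copy of $U$ starts at every position $p\ge m$, and comparing the Parikh vectors of $w_p\cdots w_{p+|U|-1}$ and $w_{p+1}\cdots w_{p+|U|}$ then gives $w_p=w_{p+(n-m)}$ for every $p\ge m$, so $w$ is ultimately periodic with period $n-m$; by recurrence it is periodic. The only slightly delicate ingredient is the Parikh-preservation under the intermediate shifts in the middle paragraph; once that is in hand the rest is just an unpacking of the definition of semi-abelian return.
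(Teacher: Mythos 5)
Your proof is correct and is essentially the paper's own argument: the paper simply remarks that ``exactly the same proof'' as for the abelian-return version (Lemma \ref{periodic}) works, and your write-up is that argument spelled out, with the right key observation that the $k$ single-letter gap words arising between the consecutive occurrences at positions $m,m+1,\dots,m+|v|$ are pairwise distinct as actual words, so they already exhaust the allowed semi-abelian returns. The only quibble is your parenthetical reduction to the sub-alphabet of letters actually occurring in $w$: shrinking the alphabet does not shrink the bound $k$ appearing in the hypothesis, so that reduction is not legitimate as stated --- but it is also unnecessary, since the lemma (like the paper's own proof of Lemma \ref{periodic}) implicitly assumes every letter of $\Sigma$ occurs in $w$; otherwise a Sturmian word regarded as a word over a strictly larger alphabet would be an aperiodic counterexample.
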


\section{The structure of abelian returns of Sturmian words}

In this section we prove the ``only if'' part of Theorem
\ref{main}, and in addition we establish some properties concerning the
structure of abelian returns of Sturmian words.

 The following proposition
follows directly from definitions and basic properties of Sturmian
words:

\begin{proposition} \label{aBb} Semi-abelian returns of factors of a Sturmian word are Christoffel words.
\end{proposition}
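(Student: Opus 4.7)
The plan is to track the occurrence-count function $f:\nats \to \nats$ given by $f(j) = |w_j w_{j+1} \cdots w_{j+|u|-1}|_1$ and exploit the fact that every Sturmian word is $1$-balanced. By the balance property $f$ takes at most two values, which differ by exactly $1$ when $f$ is not constant; the abelian occurrences of $u$ are by definition the positions $j$ with $f(j) = |u|_1$. The key identity is the first-difference relation
\begin{equation*}
f(j+1) - f(j) = w_{j + |u|} - w_j,
\end{equation*}
which converts sign changes of $f$ into letter mismatches at distance $|u|$ in $w$.

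Fix consecutive abelian occurrences $n_i < n_{i+1}$ and set $v = w_{n_i} w_{n_i+1} \cdots w_{n_{i+1}-1}$. If $|v|=1$ then $v$ is a letter and Christoffel by definition, so assume $|v| \geq 2$. Then $f$ takes the value $\neq |u|_1$ at every intermediate $j \in (n_i, n_{i+1})$, so the jumps of $f$ at $j = n_i$ and at $j = n_{i+1}-1$ are nonzero and of opposite sign. Via the identity this gives $w_{n_i} \neq w_{n_i+|u|}$ and $w_{n_{i+1}-1} \neq w_{n_{i+1}-1+|u|}$ with the two differences of opposite sign. Writing $a = w_{n_i}$ and $b = w_{n_{i+1}-1}$, one reads off $a \neq b$, $w_{n_i+|u|} = b$, and $w_{n_{i+1}-1+|u|} = a$.

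For each $j$ with $n_i + 1 \leq j \leq n_{i+1} - 2$, both $f(j)$ and $f(j+1)$ equal the single value $\neq |u|_1$, so the identity yields $w_j = w_{j+|u|}$. Letting $B = w_{n_i+1} \cdots w_{n_{i+1}-2}$ (empty when $|v|=2$), this says that the length-$|v|$ factor $v' := w_{n_i+|u|} \cdots w_{n_{i+1}-1+|u|}$ of $w$ equals $bBa$, while $v = aBb$. Since $v$ and $v'$ are both factors of $w$, the word $B$ is preceded by each of $0$ and $1$ and followed by each, hence $B$ is a bispecial factor of $w$ (trivially so when empty). Therefore $v = aBb$ with $a \neq b$ and $B$ bispecial, so $v$ is a Christoffel word.

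The main obstacle is really just choosing the right object to track. Once the first-difference identity for $f$ is in play, $1$-balance immediately forces $v$ to begin and end with distinct letters and produces, for free, the companion factor $v' = bBa$ at the parallel position $|u|$ further along in $w$; bispeciality of the middle segment $B$ then drops out by comparing $v$ and $v'$.
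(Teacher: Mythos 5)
Your proof is correct and is essentially the paper's own argument: the paper also compares letters at distance $|u|$ (your first-difference identity for $f$ in disguise), uses balance to force the second mismatch to go the opposite way, and concludes that the return is $aBb$ with the companion factor $bBa$ witnessing bispeciality of $B$. The explicit occurrence-count function $f$ is a clean way to package it, but the underlying mechanism is identical.
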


\noindent \emph{Proof.} Consider semi-abelian return to a factor
$v$ of length $n$ starting at position $i$ of a Sturmian word $w$.
We should prove that its semi-abelian return is either a letter or
of the form $aBb$, where $a\neq b$ are letters, $B$ is a bispecial
factor of $w$. If $w_i=w_{i+n}$, then the letter $w_i$ is
semi-abelian return. If $w_i=a$, $w_{i+n}=b$, $a\neq b$, then
there exists $k\geq 0$, such that $w_{i+1}\dots w_{i+k} =
w_{i+1+n}\dots w_{i+k+n}$, and $w_{i+k+1}\neq w_{i+k+1+n}$. Since
$w$ is balanced, we have that $w_{i+k+1}=b$, $w_{i+k+1+n}=a$. So,
$w_{i+k+2}\dots w_{i+k+n+1}\sim_{ab}v$, and $w_{i}\dots
w_{i+k+1}\sim_{ab} w_{i+n}\dots w_{i+k+n+1}$ is semi-abelian
return to $v$. By definition the factor $w_{i+1}\dots w_{i+k} =
w_{i+1+n}\dots w_{i+k+n}$ is bispecial. \qed

\begin{corollary} \label{1}  Fix $l\geq 2.$ Then each factor  $u$ of a Sturmian word  has at most one abelian return of length $l$. \end{corollary}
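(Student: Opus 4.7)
The plan is to read the corollary as a direct consequence of Proposition \ref{aBb} together with the uniqueness of bispecial factors of each length in a Sturmian word. Since abelian returns are abelian classes of semi-abelian returns, it suffices to show that all semi-abelian returns of length $l$ to a given factor $u$ lie in a single abelian class, and by Proposition \ref{aBb} these semi-abelian returns are Christoffel words. So I would start by unpacking what a Christoffel word of length $l \geq 2$ can look like: since single-letter Christoffel words have length $1$, every Christoffel word of length $l \geq 2$ must be of the form $aBb$ with $a \neq b \in \{0,1\}$ and $B$ a bispecial factor of the Sturmian word of length $l-2$.

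Next I would invoke the standard fact, recalled in the preliminaries, that a Sturmian word has exactly one right special and one left special factor of each length, and that its language is closed under reversal. This forces any bispecial factor to be a palindrome and, in particular, there is at most one bispecial factor of each given length (if none, there is nothing to prove, as then there are no Christoffel words of length $l$ at all). Hence, for the unique bispecial factor $B$ of length $l-2$, the only candidate Christoffel words of length $l$ are $0B1$ and $1B0$.

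Then I would observe that these two candidates are abelian equivalent: both contain $|B|_0 + 1$ zeros and $|B|_1 + 1$ ones. Therefore all Christoffel words of length $l$ belong to a single abelian class, and consequently all semi-abelian returns of length $l$ to $u$ lie in the same abelian class. This yields at most one abelian return of $u$ of length $l$, proving the corollary.

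The argument is essentially immediate once Proposition \ref{aBb} is in hand; there is no real obstacle to overcome beyond pointing out the uniqueness of the bispecial factor of length $l-2$ and the trivial abelian equivalence $0B1 \sim_{ab} 1B0$. The only subtlety worth flagging is the boundary case $l=2$, where $B=\varepsilon$ is vacuously bispecial and the two Christoffel words reduce to $01$ and $10$, again in the same abelian class.
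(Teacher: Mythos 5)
Your proof is correct and follows exactly the route the paper intends: the corollary is stated as an immediate consequence of Proposition \ref{aBb}, and the implicit argument is precisely yours — the only Christoffel words of length $l\geq 2$ are $0B1$ and $1B0$ for the unique bispecial factor $B$ of length $l-2$, and these are abelian equivalent. Nothing is missing.
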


\smallskip

Now we proceed to the "only if" part of Theorem \ref{main}:

\begin{proposition} \label {necessity} Each factor of a Sturmian word has two or three abelian returns.
\end{proposition}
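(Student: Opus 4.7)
The plan is to exploit the geometric realization of Sturmian words as codings of an irrational rotation. Let $w$ be a Sturmian word of slope $\alpha$, obtained from the rotation $R_\alpha \colon y \mapsto y + \alpha \pmod 1$ on the circle $\reals/\ints$, with respect to the partition $I_0 = [0,1-\alpha)$ (coded as $0$) and $I_1 = [1-\alpha,1)$ (coded as $1$). For a factor $u$ of $w$ of length $n$, let $A_u$ denote the subset of $\reals/\ints$ consisting of starting positions $y$ for which the length-$n$ factor beginning at $y$ is abelian equivalent to $u$.

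First I would show that $A_u$ is a single arc. The quantity $|u|_1$, viewed as a function of $y$, is a step function whose discontinuities are confined to the $n+1$ points $\{-k\alpha \bmod 1 : 0 \leq k \leq n\}$. A careful accounting of the jumps shows that only the crossings at $y \equiv 0$ (decrease by $1$) and $y \equiv -n\alpha$ (increase by $1$) produce a net change, while the intermediate crossings contribute simultaneous gains and losses that cancel. Consequently each of the two abelian classes of factors of length $n$ corresponds to a single arc.

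Next I would apply the Three-Distance Theorem of Steinhaus to the first return map $R_{A_u} \colon A_u \to A_u$ induced by $R_\alpha$. This map is an interval exchange transformation of at most three intervals, so $A_u$ is partitioned into at most three sub-arcs on each of which the return time is constant. The crucial additional observation is that the return \emph{word} (and not merely the return time) is constant on each such sub-arc: the piece boundaries are precisely the preimages of $\partial A_u = \{0, -n\alpha\}$ lying in $A_u$, and since $0 \in \partial I_0$ as well, every preimage inside $A_u$ of the coding-partition boundary $\{0, 1-\alpha\}$ is automatically a piece boundary. Hence no orbit position $y+k\alpha$ can cross $\partial I_0$ as $y$ varies in the interior of a piece, so the coding of the orbit, namely the return word itself, is constant on each piece.

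It follows that there are at most three distinct return words to the abelian class of $u$, hence at most three abelian return classes. For the lower bound of $2$, suppose to the contrary that $u$ had only one abelian return. Then all return words would share the same length $\ell$ and the same count of $1$'s, say $m$; consequently the abelian class of $u$ would recur at evenly spaced positions and $w$ would decompose into blocks of length $\ell$ each containing exactly $m$ ones. The frequency of the letter $1$ in $w$ would then be $m/\ell$, contradicting the irrationality of the slope $\alpha$. The main obstacle is the second step: verifying that the return word, rather than only the return time, is constant on each piece of the interval exchange. This rests on the coincidence that one endpoint of $\partial A_u$ coincides with an endpoint of the coding partition $\{I_0, I_1\}$, which prevents the coding of orbit points from changing in the interior of any IET piece.
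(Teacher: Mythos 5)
Your proof is correct, but it takes a genuinely different route from the paper. The paper argues combinatorially: it embeds the factor $v$ in a long standard factor $w$, forms the Jenkinson--Zamboni lexicographic array of the conjugacy class of $w$, observes that the occurrences of the abelian class of $v$ fill a block of $n+1$ consecutive rows in each column, and rules out a fourth return by comparing two rows that are cyclic shifts of one another; the same machinery is then reused to prove that a factor has exactly two abelian returns iff it is singular (Theorem \ref{singular}). You instead work with the rotation realization: your computation that $|u|_1(y)$ has cancelling jumps at the interior points $-j\alpha$ ($1\le j\le n-1$) and net jumps only at $0$ and $-n\alpha$ is exactly right and shows each abelian class is an arc; the three-distance theorem then makes the induced return map a $\le 3$-interval exchange. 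The one point that needed care --- that the return \emph{word}, not merely the return time, is constant on each IET piece --- you handle correctly: a coding change of $\chi(y+k\alpha)$ occurs when $y+k\alpha$ crosses $0$ or $1-\alpha=R_\alpha^{-1}(0)$, i.e.\ when $y+j\alpha$ crosses the arc endpoint $0$ for some $0\le j\le r$, and these are precisely (contained in) the piece boundaries. The lower bound via irrationality of the letter frequency is the same one-line argument the paper uses. Two remarks on what each approach buys: your argument in fact bounds the number of distinct \emph{semi-abelian} returns by three, which is stronger and is what the paper needs later for Theorem \ref{semi-abelian} (there obtained via Proposition \ref{aBb} and Corollary \ref{1}); on the other hand, the paper's array technique localizes the returns explicitly and extends to the singular-factor characterization, which is not immediate from the IET picture. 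You should also note, for completeness, that a given Sturmian word must first be written as a mechanical word $s_{\alpha,\rho}$ or $s'_{\alpha,\rho}$ (the two boundary conventions are symmetric), since abelian returns depend on the actual sequence of occurrences and not only on the factor set.
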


The proof of this proposition is based on the characterization of
balanced words presented in \cite {jz}. We will need some notation
from the paper.

Suppose $1\leq p <q$ are positive integers such that $\gcd(p,
q)=1$. Let $\mathscr{W}_{p, q}$ denote the set of all words $w \in
\{0, 1\}^q$ with $|w|_1 =p$. If $w\in \mathscr{W}_{p, q}$ then the
symbol $1$ occurs with frequency $p/q$ in $w$. Define the
\emph{shift} $\sigma: \{0, 1\}^\omega \to \{0, 1\}^\omega$ by
$\sigma(w)_i=w_{i+1}$. Similarly define $\sigma: \{0, 1\}^q \to
\{0, 1\}^q$ by $\sigma(w_0\dots w_{q-1}) = w_1\dots w_{q-1}w_0$.

Since $\gcd(p, q)=1,$ it follows that any element of $\mathscr{W}_{p, q}$ has
the least period $q$ under the shift map $\sigma$. We will write
$w\sim w'$ if there exists $0\leq k \leq q-1$ such that $w'
=\sigma^k(w)$. In this case we say that $w$, $w'$ are
\emph{cyclically conjugate}, or that $w$, $w'$ are cyclic shifts
of one another. The equivalence class $\{\sigma^i(w): 0\leq i <
q\}$ of each $w\in\mathscr{W}_{p, q}$ contains exactly $q$
elements. Let
$$\mathbb{W}_{p,q} = \mathscr{W}_{p, q} / \sim$$
denote the corresponding quotient. Elements of $\mathbb{W}_{p,q}$
are called orbits. It will usually be convenient to denote an
equivalence class in $\mathbb{W}_{p,q}$ by one of its elements
$w$.

Given an orbit $[w]\in \mathbb{W}_{p,q}$, let
$$w_{(0)}<_L w_{(1)} <_L \dots <_L w_{(q-1)}$$
denote the lexicographic ordering of its elements. Define the
lexicographic array $A[w]$ of the orbit $[w]$ to be the $q\times
q$ matrix whose $i$th row is $w_{(i)}$. We will index this array
by $0\leq i, j \leq q-1$, so that $A[w] =
(A[w]_{ij})_{i,j=0}^{q-1}$. For $0\leq i, j \leq q-1$,  let
$w_{(i)}[j]$ denote the length-$(j+1)$ prefix of $w_{(i)}$; so the
$w_{(i)}[j]$ are the length-$(j + 1)$ factors of $w$, counted with
multiplicity. For each $j$ this induces the following
lexicographic ordering:
$$w_{(0)}[j] \leq_L w_{(1)}[j] \leq_L \dots \leq_L w_{(q-1)}[ j].$$

\begin{theorem} {\rm \cite{jz}} Suppose $w \in \{0,1\}^q$. The following are equivalent:

\noindent {\rm (1)} $w$ is a balanced word,

\noindent {\rm (2)} $|w_{(i)}[j]|_1\leq |w_{(i+1)}[j]|_1$ for all $0\leq
i\leq q - 2$ and $0\leq j\leq q - 1$.

\end{theorem}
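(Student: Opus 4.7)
The plan splits the proof by implication; direction $(1)\Rightarrow(2)$ is a short bookkeeping, while $(2)\Rightarrow(1)$ requires substantively more care.

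\medskip

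For $(1)\Rightarrow(2)$, I would fix a pair of lex-consecutive rows $w_{(i)}<_L w_{(i+1)}$ and let $k$ be the first position at which they disagree. By the lex convention $w_{(i)}$ carries a $0$ and $w_{(i+1)}$ a $1$ at position $k$, so I write $w_{(i)}=\alpha 0 u$ and $w_{(i+1)}=\alpha 1 v$ with $|\alpha|=k$. For $j<k$ the two prefixes coincide, so their $1$-counts agree. For $j\geq k$ a direct count yields
\[
|w_{(i+1)}[j]|_1 - |w_{(i)}[j]|_1 \;=\; 1 + |v[j-k-1]|_1 - |u[j-k-1]|_1,
\]
with the convention $|x[-1]|_1 = 0$ when $j=k$. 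The length-$(j-k)$ prefixes of $u$ and $v$ are both factors of the cyclic word $w$, so balance forces $\bigl||u[j-k-1]|_1-|v[j-k-1]|_1\bigr|\leq 1$, and the displayed difference lies in $\{0,1,2\}$, which gives (2).

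\medskip

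For $(2)\Rightarrow(1)$ I would argue by contrapositive. Assume $w$ is not balanced and let $n$ be the minimal length at which two length-$n$ factors of the cyclic word $w$ have $1$-counts differing by at least $2$. Minimality at length $n-1$ forces the length-$(n-1)$ $1$-counts to occupy $\{p,p+1\}$ for some $p$, and consequently the length-$n$ $1$-counts attain exactly the three values $p$, $p+1$, $p+2$. It is useful to reformulate (2) at column $m-1$ as the statement: for every pair of length-$m$ factors $x<_L y$ of $w^{\omega}$ one has $|x|_1\leq|y|_1$. The target is to exhibit a pair of length-$n$ factors violating this. I would select length-$n$ factors $x=\alpha 0\beta$ of $1$-count $p$ and $y=\alpha 1\gamma$ of $1$-count $p+2$ that are lex-adjacent among the length-$n$ factors, so they share the longest common prefix $\alpha$; from $|y|_1-|x|_1=2$ one extracts $|\gamma|_1-|\beta|_1=1$, and (2) applied at a shorter column then forces $\beta<_L\gamma$. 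I would then aim to exhibit a length-$n$ factor of the form $\alpha 1\delta$ of $w$ with $\delta$ lex-strictly below $\gamma$ and $|\delta|_1$ chosen so that $|\alpha 1\delta|_1\in\{p+1,p+2\}$; such a factor sits lex-strictly between $x$ and $y$ and contradicts the assumed adjacency.

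\medskip

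The main obstacle is ensuring that this intermediate factor is a genuine factor of the cyclic word $w$, not merely an abstract string over $\{0,1\}$. I anticipate invoking the cyclic-shift structure of the rows of $A[w]$: rotate the row beginning with $\alpha 1\gamma$ by $k+1$ positions so that $\gamma$ itself becomes a prefix of another row, and track back through $w$ to find an occurrence of $\alpha 1$ whose continuation $\delta$ meets the required lex and $1$-count bounds. Carrying out this rotation-and-tracking step, together with a case analysis that distinguishes whether $\alpha 1$ has a unique or multiple occurrences in $w$, is where the combinatorial substance of this direction lies.
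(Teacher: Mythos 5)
First, a remark on the comparison you were asked about: the paper does not prove this statement at all --- it is quoted verbatim from the reference \cite{jz} (Jenkinson--Zamboni) and used as a black box. So there is no in-paper proof to measure your argument against; I can only assess it on its own merits. Your direction $(1)\Rightarrow(2)$ is correct and complete: writing the two lex-consecutive rows as $\alpha 0u$ and $\alpha 1v$, the prefix count difference is $1+|v[j-k-1]|_1-|u[j-k-1]|_1\geq 1-1=0$ by balance applied to the two equal-length factors $u[j-k-1]$ and $v[j-k-1]$ of the cyclic word. (You should add the trivial remark that if $w_{(i)}=w_{(i+1)}$ there is nothing to prove, since your ``first position of disagreement'' presupposes distinctness.)

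Direction $(2)\Rightarrow(1)$ has a genuine gap, in two places. (i) The selection step is internally inconsistent: you correctly observe that, at the minimal unbalanced length $n$, all three $1$-counts $p,p+1,p+2$ are attained, and you correctly reformulate (2) as ``lex order refines $1$-count order.'' But then, under (2), every count-$(p+1)$ factor lies lexicographically strictly between every count-$p$ factor and every count-$(p+2)$ factor, so a count-$p$ factor and a count-$(p+2)$ factor are \emph{never} lex-adjacent; the pair $(x,y)$ you propose to select does not exist, and its non-existence is not by itself a contradiction with anything. If instead you mean $x$ to be the lex-largest count-$p$ factor and $y$ the lex-smallest count-$(p+2)$ factor, then producing a factor ``strictly between $x$ and $y$'' is no longer a contradiction, since count-$(p+1)$ factors already sit there. (ii) Even granting a sensible choice of $x=\alpha 0\beta$ and $y=\alpha 1\gamma$, the heart of the matter --- exhibiting a \emph{genuine} factor $\alpha 1\delta$ of the cyclic word with the required lex and count properties --- is exactly the step you defer (``rotation-and-tracking''), and no mechanism is given for why the continuation $\delta$ after some occurrence of $\alpha 1$ should be lexicographically below $\gamma$. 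A workable route in this spirit is to invoke the classical fact that an unbalanced cyclic word contains both $0z0$ and $1z1$ for a common $z$, and then to produce an explicit lex inversion among length-$(|z|+3)$ (or shorter) factors by extending these two occurrences to the left; alternatively one can argue as in \cite{jz} that (2) forces every column of $A[w]$ to be a rotation of $0^{q-p}1^p$, which pins $w$ down as the mechanical (hence balanced) word. As written, the second implication is a plan with its decisive step missing, not a proof.
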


The following proposition from \cite{jz} gives a very practical
way of writing down the lexicographic array associated to a
balanced word.

\begin{proposition} {\rm \cite{jz}} Let $[w]$ be the unique balanced orbit in $\mathbb{W}_{p,q}$. Define $u\in \mathscr{W}_{p, q}$ by
$$u = 0 \dots 0 \underbrace{1 \dots 1}_p$$
Then, for $0\leq i, j \leq q - 1$,

\noindent {\rm (1)} $A[w]_{ij} =(\sigma^{jp}u)_i$,

\noindent {\rm (2)} The $j$th column of $A[w]$ is (the vector
transpose of) the word $\sigma^{jp}u$

\noindent {\rm (3)} $w_{(i)} =u_i(\sigma^{p}u)_i(\sigma^{2p}u)_i
\dots (\sigma^{(q-1)p}u)_i$.

\end{proposition}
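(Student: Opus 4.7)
The three statements (1), (2), (3) are equivalent reformulations of a single claim: (1) gives the $(i,j)$-entry of $A[w]$ one at a time, while (2) and (3) collect these entries into columns and rows, respectively. So my plan is to prove (1) and deduce (2) and (3) as immediate restatements.

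First I would identify a concrete representative of the unique balanced orbit in $\mathbb{W}_{p,q}$. The natural choice is the Christoffel-type word $c$ with $c_k = \lfloor (k+1)p/q \rfloor - \lfloor kp/q \rfloor$ for $0 \leq k \leq q-1$, which lies in $\mathscr{W}_{p,q}$ and is straightforwardly balanced. A short calculation yields the key identity
$$c_k = u_{kp \bmod q},$$
since both sides are $1$ exactly when $kp \bmod q \geq q-p$.

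The heart of the argument is the following lemma on the lexicographic order of the rotations $c^{(k)} := \sigma^k(c)$:
$$c^{(k_1)} <_L c^{(k_2)} \iff (k_1 p \bmod q) < (k_2 p \bmod q).$$
I would prove it by tracking the orbits of $r_{k_i} := k_i p \bmod q$ under translation by $p$ modulo $q$, using the observation that the $j$-th letter of $c^{(k)}$ equals $u_{(r_k + jp) \bmod q}$. The crucial point is that the relative order of the two translates $(r_{k_1} + jp) \bmod q$ and $(r_{k_2} + jp) \bmod q$ is preserved as long as both lie in the same half of the partition $\{0,\ldots,q-p-1\}$ versus $\{q-p,\ldots,q-1\}$; the first time they split across this partition, the order flips, but simultaneously a disagreement between $c^{(k_1)}$ and $c^{(k_2)}$ appears. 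Hence if $r_{k_1} < r_{k_2}$, then at the first position $j$ of disagreement we must have $c^{(k_1)}_j = 0$ and $c^{(k_2)}_j = 1$, establishing $c^{(k_1)} <_L c^{(k_2)}$. Geometrically, this says that ordering rotations by the fractional parts of $kp/q$ on the circle agrees with the lexicographic ordering of their codings.

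Granting the lemma, (1) follows by substitution. Writing $\pi(i) = i p^{-1} \bmod q$ (well-defined since $\gcd(p,q) = 1$), the lemma gives $w_{(i)} = c^{(\pi(i))}$, and hence
$$A[w]_{ij} = (w_{(i)})_j = c^{(\pi(i))}_j = c_{\pi(i) + j} = u_{(\pi(i)+j)p \bmod q} = u_{(i + jp) \bmod q} = (\sigma^{jp} u)_i,$$
using $\pi(i) p \equiv i \pmod q$. I expect the main obstacle to be the key lemma; besides the direct orbit-tracking argument sketched above, an alternative is to reduce to the classical description of the rotations of a Christoffel (or Lyndon-conjugate) word of slope $p/q$, where this order-preserving correspondence between starting points and lexicographic ranks is standard.
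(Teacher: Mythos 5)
Your argument is correct, but note that the paper does not actually prove this proposition: it is imported verbatim from the cited reference \cite{jz}, so there is no in-paper proof to compare against. Your self-contained derivation is a legitimate and standard route: you realize the unique balanced orbit by the mechanical word $c_k=\lfloor (k+1)p/q\rfloor-\lfloor kp/q\rfloor$, observe the identity $c_k=u_{kp \bmod q}$ (which is right: $c_k=\lfloor (r_k+p)/q\rfloor$ with $r_k=kp\bmod q$, so $c_k=1$ iff $r_k\geq q-p$, matching $u=0^{q-p}1^p$), and then prove that lexicographic rank of a rotation agrees with the value $kp\bmod q$ of its starting point. The order-tracking induction you sketch does close up: if $a_j<b_j$ and $u_{a_j}=u_{b_j}$ then either neither or both of $a_j,b_j$ wrap under addition of $p$, so the order persists; and given $a_j<b_j$, a disagreement can only be of the form $u_{a_j}=0$, $u_{b_j}=1$, since the reverse would force $q-p\leq a_j<b_j<q-p$. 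Combined with $\pi(i)p\equiv i\pmod q$ this yields (1) by direct substitution, and (2), (3) are indeed just the column and row readings of (1); I checked the conclusion against the paper's Example 2 ($p=3$, $q=7$, $c=0010101$, $\pi(1)=5$, $\sigma^5(c)=0100101=w_{(1)}$). The only points you leave implicit are the balancedness of $c$ (standard for mechanical words, and uniqueness of the balanced orbit is already assumed in the statement) and the fact that distinct rotations differ somewhere (which follows from the least period being $q$ when $\gcd(p,q)=1$, as the paper notes). Your approach is essentially the ordering-versus-rotation correspondence that also underlies the Burrows--Wheeler description of Christoffel words (compare the reference \cite{mr} in the paper), and is in the same spirit as, though more elementary and more narrowly targeted than, the general lexicographic characterization developed in \cite{jz}.
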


\begin{example}\label{example2}  Consider a balanced word $w=0101001\in
\mathscr{W}_{p, q}$. The lexicographic ordering of $[w]$ is
$$0010101 <_L 0100101 <_L 0101001 <_L 0101010 <_L 1001010 <_L 1010010 <_L 1010100,$$
so the corresponding lexicographic array is
$$A[w]= \left( \begin{array}{ccccccc}
0 & 0 & 1 & 0 & 1 & 0 & 1 \\
0 & 1 & 0 & 0 & 1 & 0 & 1 \\
0 & 1 & 0 & 1 & 0 & 0 & 1 \\
0 & 1 & 0 & 1 & 0 & 1 & 0 \\
1 & 0 & 0 & 1 & 0 & 1 & 0 \\
1 & 0 & 1 & 0 & 0 & 1 & 0 \\
1 & 0 & 1 & 0 & 1 & 0 & 0 \end{array} \right) $$ \end{example}

We now apply the above technique for studying abelian
returns as follows:

Fix a Sturmian word $s$ and a factor $v.$ First notice that $v$
cannot have only one abelian return, otherwise we immediately get a
contradiction with the irrationality of letter frequencies in $s$.
We consider a standard factor $w$ %(see, e. g., \cite {lothaire})
of $s$ of long enough length to contain $v$ and all abelian
returns to $v$. Let $|w|=q$, $|w|_1=p$. Then all the conjugates of
$w$ are factors of $s$, they are pairwise distinct, and
$\gcd(p,q)=1$ (see, e. g. \cite{mr}). Without loss of generality we can assume
that $v$ is "poor"\ in $1$-s, i.e., it contains fewer
 $1$'s than the unique other abelian class of the same
length. Then if we consider in $A[w]$ the words $w_{(i)}[j]$, we
have that there exists $n< q-1$ such that
$w_{(i)}[j]\sim_{ab}v$ for $0\leq i \leq n$, and
$w_{(i)}[j]\not\sim_{ab}v$ for $n < i \leq q-1$. Note also that
$A[w]_{im}=A[w]_{(i+q-p)(m+1)}$; from now on the indices are taken
modulo $q$.

The lexicographic array allows us to find abelian returns to $v$
as follows: For a word $u$ denote by $u[m, l]$ the factor $u_m
\dots u_l$. If for an $i$, $0\leq i \leq n$, we have $w_{(i)}[k,
k+j]\sim_{ab}v$, where $v$ is as above and $k>0$ is the minimal
such length, then by definition $w_{(i)}[k-1]$ is a semi-abelian
return to $v$. Notice also that if $A[w]_{(i-1)k}=1$ and
$A[w]_{ik}=0$, then $w_{(m)}[k, k+j]\sim_{ab}v$ for $m=i, \dots,
i+n$. That is, we have exactly $n+1$ words from the abelian class
of $v$ starting in every column, and these words are in
consecutive $n+1$ rows (the first and the last row are considered
as consecutive).

\begin{example}\label{example3} Consider abelian returns to the abelian
class of $001$ in the Example \ref{example2}. $w_{(i)}[2]\sim_{ab}001$
for $0\leq i \leq 4$; $w_{(i)}[1,3]\sim_{ab}001$ for $i= 4, 5,
6, 0, 1$, $w_{(i)}[2,4]\sim_{ab}001$ for $i= 1, \dots, 5$. So,
the abelian returns are $w_{(0)}[0]=w_{(1)}[0]=0$, $w_{(4)}[0]=1$,
$w_{(2)}[1]=w_{(3)}[1]=01$. \end{example}

\noindent \emph{Proof of Proposition \ref{necessity}.} Suppose
that some factor $v$ of length $j+1$ has at least $4$ abelian
returns. Without loss of generality we may assume that $v$ is poor
in $1$, and in the lexicographic array, rows $0\dots n$ start with
factors from the abelian class of $v$. By Corollary \ref {1} there
can be at most one abelian return of a fixed length greater than
$1$ (length $1$ will be considered separately), so in a
lexicographic array we must have one of the following situations:

\medskip

\noindent 1) there exist $k_1<k_2$ and $n_1<n_2<n$ such that
$w_i[j]$ has semi-abelian returns of length $k_1$ for $i=1,\dots,
n_1$, $w_i[j]$ has semi-abelian returns of length $k_2$ for
$i=n_1+1,\dots, n_2$, and $w_{n_2+1}[j]$ has semi-abelian returns
of length greater than $k_2$;

\smallskip

\noindent 2) symmetric case: there exist $k_1<k_2$ and $n_1<n_2<n$
such that $w_i[j]$ has semi-abelian returns of length $k_2$ for
$i=n_1+1,\dots , n_2$, $w_i[j]$ has semi-abelian returns of length
$k_1$ for $i=n_2+1, \dots , n$, and $w_{n_1}[j]$ has semi-abelian
returns of length greater than $k_2$.

\medskip

We consider only case 1) as the proof of case 2) is similar.
First, in case 1) one can notice that the words $w_{n_1}[k_1,
k_1+q]$ and $w_{n_2}[k_2, k_2+q]$ coincide. So if we consider
semi-abelian returns "to the left" of the words $w_{n_1}[k_1,
k_1+j]$ and $w_{n_2}[k_2, k_2+j]$, they should be the same, but
they are not: the first one is of length $k_1$, the second one is
of length $k_2$.

It remains to consider the case when $v$ has both letters as
abelian returns. It can be seen directly from the lexicographic
array, that the third and the last return is $01$ (in this case
after a word not from abelian class of $v$ we will necessarily
have a word from abelian class of $v$, i.e., the longest possible
length of abelian return is $2$). \qed

\begin{theorem}\label{singular} A factor of a Sturmian word has two abelian returns if and only if it is singular.
\end{theorem}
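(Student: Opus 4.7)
The plan is to prove the two implications separately. The forward direction (singular $\Rightarrow$ two abelian returns) follows easily from Vuillon's theorem; the reverse direction requires a detailed analysis of the lexicographic array set up in the proof of Proposition~\ref{necessity}.

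For the forward direction, suppose $v$ is singular, so $[v]=\{v\}$. Then the semi-abelian returns to $[v]$ coincide with the ordinary return words of $v$, and each abelian return to $[v]$ is just the abelian class of such a return word. Vuillon's Theorem~\ref{returns} supplies exactly two return words $R_1,R_2$ of $v$, giving at most two abelian returns $[R_1],[R_2]$. Proposition~\ref{necessity} (whose proof rules out a single abelian return using the irrationality of letter frequencies) forces the count to be at least two, so $v$ has exactly two abelian returns.

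For the reverse direction I would argue the contrapositive: assuming $|[v]|\geq 2$, I show that $v$ has three (rather than two) abelian returns. Following the lexicographic-array setup of the proof of Proposition~\ref{necessity}, I embed $v$ into a sufficiently long standard factor $w$ of length $q$ (with $|w|_1=p$), assume without loss of generality that $v$ is poor in $1$'s, and consider rows $0,\dots,n$ of $A[w]$ which carry length-$|v|$ prefixes in $[v]$. Since $|[v]|\geq 2$ and the cyclic shifts of a sufficiently long standard factor realise every length-$|v|$ factor of $s$, at least two distinct length-$|v|$ prefixes occur among these rows. From the proof of Proposition~\ref{necessity} the semi-abelian return lengths $k_0,\dots,k_n$ take at most two distinct values (three would already yield $\geq 4$ abelian returns), rows with the same $k_i$ form a consecutive lex-order block, and by Corollary~\ref{1} each length $\ell\geq 2$ contributes a single abelian class while length $1$ contributes at most two (one per letter). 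Exactly two abelian returns therefore means one of two scenarios: (a) $\{k_i\}=\{1,\ell\}$ with $\ell\geq 2$ and all length-$1$ rows sharing the same first letter; or (b) $\{k_i\}=\{\ell_1,\ell_2\}$ with $2\leq\ell_1<\ell_2$ and no length-$1$ row.

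The heart of the argument is to show that in either scenario the presence of two distinct prefixes in $[v]$ among rows $0,\dots,n$ is incompatible with only two abelian returns. The approach is to combine the explicit description of $A[w]$ from the proposition of Jenkinson--Zamboni quoted above (column $j$ is the cyclic shift of $u=0^{q-p}1^{p}$ by $jp$) with the identity $A[w]_{im}=A[w]_{(i+q-p)(m+1)}$. In scenario (a) the single-letter constraint at the length-$1$ block pins down the pattern in the $|v|$-th column of $A[w]$ inside that block, and propagation via the shift identity into the adjacent $\ell$-block forces all rows $0,\dots,n$ to share one common length-$|v|$ prefix, contradicting $|[v]|\geq 2$. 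Scenario (b) is handled by a similar transition analysis across the boundary between the $\ell_1$- and $\ell_2$-blocks. The main obstacle is carrying out this combinatorial bookkeeping rigorously: one must track how the partition of rows $0,\dots,n$ by length-$|v|$ prefix interacts with the partition by $k_i$-value inside the highly structured lexicographic array, and verify that two distinct prefixes cannot coexist with a single letter class in scenario (a) or with no letter class in scenario (b).
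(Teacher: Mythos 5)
Your forward direction is fine and is exactly the paper's argument: for a singular $v$ the semi-abelian returns are the ordinary returns, Vuillon's theorem gives two of them, and one abelian return is excluded by irrationality of frequencies. The problem is the reverse direction, which as written is a plan rather than a proof. The entire content of that direction is the claim that ``the presence of two distinct prefixes in $[v]$ among rows $0,\dots,n$ is incompatible with only two abelian returns,'' and you never actually establish it: you describe a ``propagation via the shift identity'' that is supposed to force all rows to share one prefix, and then you yourself flag ``carrying out this combinatorial bookkeeping rigorously'' as the main obstacle. That bookkeeping \emph{is} the theorem. The paper's proof does it concretely: in its Case 1a it shows that if $w_{(i)}[j]\neq w_{(i+1)}[j]$ for adjacent rows in the $[v]$-block, then lexicographic monotonicity produces indices $l<j$ with $A[w]_{il}=0$, $A[w]_{(i+1)l}=1$, hence $A[w]_{i(l+1)}=1$, $A[w]_{(i+1)(l+1)}=0$, which creates an extra occurrence $w_{(i+1)}[l+1,l+1+j]\sim_{ab}v$ and hence a semi-abelian return of a length that contradicts the assumed two-return pattern. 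Nothing in your sketch performs this step or an equivalent one.

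There are also two structural gaps in your case split. First, ``rows with the same $k_i$ form a consecutive lex-order block'' is not automatic: the shorter-return rows can occur as two blocks, at the top and at the bottom of the $[v]$-block (the paper's Case 2, where $w_{(i)}[k_1-1]$ is a return for $0\leq i\leq m_1$ \emph{and} for $m_2+1\leq i\leq n$). The paper needs a separate argument to kill this configuration (it forces $n>q/2$, hence $k_1=1$, and then a sign contradiction in column $0$); your scenarios (a) and (b) silently assume the two-block arrangement away. Second, your dichotomy by whether some return has length $1$ does not line up with the dichotomy that actually drives the paper's proof, namely whether the boundary entries $A[w]_{mk_2}$, $A[w]_{(m+1)k_2}$ are $(1,0)$ or not: it is only in the $(1,0)$ subcase that singularity is derived, and in the complementary subcase one must instead derive a contradiction from $w_{(n)}=w_{(m)}[k_1,k_1+q-1]$ having two different returns. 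So the reverse direction needs to be rebuilt around an explicit argument of this kind; as it stands it asserts the conclusion it is supposed to prove.
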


\noindent \emph{Proof.} The method of proof is similar to the
proof of Proposition \ref{necessity} and relies upon the
characterization of balanced words from \cite {jz}.

If a factor is singular, then it is the only word in its abelian
class, so its semi-abelian returns coincide with usual returns.
Since every factor of a Sturmian word has two returns
\cite{vuillon}, then a singular factor has two semi-abelian
returns, and hence two abelian returns.

Now we will prove the converse, i.e., that if a factor $v$ of a Sturmian word $s$
of length $j+1$  has two abelian returns, then it
is singular.

As in the proof of Proposition \ref {necessity}, we consider a
standard factor $w$ of $s$ of long enough length to contain $v$
and all abelian returns to $v$, and denote $|w|=q$, $|w|_1=p$.
Without loss of generality we again assume that $v$ is "poor"\ in
$1$-s, so that there exists $n< q-1$ such that
$w_{(i)}[j]\sim_{ab}v$ for $0\leq i \leq n$, and
$w_{(i)}[j]\not\sim_{ab}v$ for $n < i \leq q-1$.

\medskip

It is not difficult to see that two abelian returns are possible
in one of the following cases:

\medskip

\noindent Case 1) there exist $0 \leq m < n$, $0 < k_1,k_2 < q$
such that $w_{(i)}[k_1-1]$ is semi-abelian return for all $0\leq
i\leq m$, $w_{(i)}[k_2-1]$ is semi-abelian return for all $m+1\leq
i \leq n$;

\medskip

\noindent Case 2) there exist $0 \leq m_1 < m_2 < n$, $0< k_1 <
k_2 < q$ such that $w_{(i)}[k_1-1]$ is semi-abelian return for all
$0\leq i\leq m_1$ and $m_2+1\leq i\leq n$; $w_{(i)}[k_2-1]$ is
semi-abelian return for all $m_1+1\leq i \leq m_2$.

\bigskip

\noindent \textbf{Case 1)} In case 1) we will assume that
$k_1< k_2$, the proof in case $k_2< k_1$ is symmetric. We will
consider two subcases:

\medskip

\noindent \textbf{Case 1a)} $A[w]_{m k_2}=1$, $A[w]_{(m+1)
k_2}=0$. This means that $w_{(i)}[k_2, k_2+j]\sim_{ab}v$ for
$i=m+1,\dots, m+n+1$, and $A[w]_{m (k_2-1)}=0$, $A[w]_{(m+1)
(k_2-1)}=1$. %I. e., the block of words from abelian class of $v$
%starting in column $k_2$ starts in row $m+1$.
So, the element $A[w]_{(m+1) k_2}$ is a left-upper element of a
block of abelian class of $v$, and $A[w]_{m (k_2-1)}$ is a
right-lower element of another such block. It is easy to see that
the latter block starts in column $k_1$. Therefore,
$|v|=j+1=k_2-k_1<k_2$.

In case 1a) we will prove that the abelian class of $v$
consists of a single word, i.e., $w_{(i)}[j]=v$ for $i=0,\dots,
n$. Suppose that $w_{(i)}[j]\neq w_{(i+1)}[j]$ for some $i\in
\{0,\dots, n-1\}$. Since the rows grow lexicogaphically, it means
that there exists $0\leq l < j<k_2-1$ such that $A[w]_{il}=0$,
$A[w]_{(i+1)l}=1$. Hence $A[w]_{i(l+1)}=1$, $A[w]_{(i+1)(l+1)}=0$,
and so $w_{(i+1)}[l+1,l+1+j]\sim_{ab} v$. If $m<i+1\leq n$,
then the word $w_{(i+1)}[j]$ has return  $w_{(i+1)}[l]$, which is
impossible, because it has return $w_{(i)}[k_2]$. Similarly we get
that the case  $0\leq i+1\leq m$ and $l+1<k_1$ is impossible.

In  case $0\leq i+1\leq m$ and $k_1\leq l+1 <k_2$ we get that the
word $w_{(i+1)}[k_1,k_1+j]$ has return $w_{(i+1)}[k_1,l]$ of
length $l-k_1+1$. But in this case $w_{(t)}[l+1, l+1+j]\sim_{ab}v$
for $t=i+1,\dots ,i+1+n$. Contradiction with the condition that
$w_{(t)}[k_2-1]$ is semi-abelian return to $w_{(t)}[j]$. So, the
case $0\leq i+1\leq m$ and $k_1\leq l+1 <k_2$ is impossible. Hence
$w_{(i)}[j] = w_{(i+1)}[j]$ for $i=0,\dots, n-1$, i.e., the
abelian class of $v$ consists of a single word.

\smallskip

\noindent \textbf{Case 1b)} $A[w]_{m k_2}= 0$ or $A[w]_{(m+1)
k_2}=1$. This means that $w_{(m)}[k_2, k_2+j]\sim_{ab}v$. Hence
the word $w_{(n)}[j]$ has semi-abelian return $w_{(n)}[k_2]$ of
length $k_2+1$, and the word $w_{(m)}[k_1, k_1+j]$ has
semi-abelian return $w_{(m)}[k_1,k_2]$ of length $k_2-k_1+1$, so
the returns are different. This is impossible since
$w_{(n)}=w_{(m)}{[k_1, k_1+q-1]}$.

\bigskip

\noindent \textbf{Case 2)} In case 2) the fact that $w_{(i)}[k_1]$
is semi-abelian return for all $0\leq i\leq m_1-1$ and $m_2+1\leq
i\leq n$ implies that $n>q/2$. So, $k_1=1$, i.e., we necessarily
have return(s) of length $1$. Since there are two abelian returns
totally, we can have only one return of length $1$, and this
return is $0$. It means that $A[w]_{i0}=0$ for $0\leq i\leq n$.
Since $w_{(m_2)}[1,j+1]\not\sim_{ab} v$ and
$w_{(m_2+1)}[1,j+1]\sim_{ab} v$, we have $A[w]_{m_2 1}=1$,
$A[w]_{(m_2+1)1}=0$, and hence $A[w]_{m_2 0}=0$,
$A[w]_{(m_2+1)0}=1$. We get a contradiction with $A[w]_{i0}=0$ for
$0\leq i\leq n$.

\medskip

So, the converse is proved, i.e., every factor of a Sturmian word
having two abelian returns is singular. \qed

\section{Proof of Theorem \ref {main}: the sufficiency}

Here we prove the "if" part of Theorem \ref {main}, i.e., we
establish the condition on the number of abelian returns forcing a
word to be Sturmian, i.e., we prove that a binary recurrent word with each factor having two or three abelian returns is Sturmian.

\begin{proposition} \label {sufficiency}
If each factor of a binary recurrent infinite word has at most
three abelian returns and at least two semi-abelian returns, then the word is balanced.
\end{proposition}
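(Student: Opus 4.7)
The plan is to argue by contradiction. Assume $w$ is binary, recurrent, satisfies both return hypotheses, yet fails to be balanced. The standard combinatorial criterion for $1$-balance in binary words then produces a palindrome $p$ (possibly empty) with $0p0, 1p1 \in F(w)$; I would choose $p$ of minimal length among such palindromes. Since $0p0$ and $1p1$ have $|\cdot|_1$ counts differing by $2$, the length-$(|p|+2)$ factors of $w$ occupy at least three distinct abelian classes.

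I would first dispose of the base case $p=\varepsilon$, in which $00$ and $11$ are both factors and $w$ decomposes naturally into maximal runs of zeros and ones. Writing $A$ and $B$ for the sets of $0$-run and $1$-run lengths respectively, both sets contain some value $\geq 2$. A direct enumeration shows that the number of abelian returns to the letter $0$ equals $1+|B|$ (and symmetrically $1+|A|$ for the letter $1$), so the bound of three abelian returns forces $|A|, |B|\leq 2$. A similar calculation for the abelian class $\{01,10\}$ yields return compositions $(1,b-1)$ and $(a-1,1)$ with $a\in A$, $b\in B$, further constraining the pair $(A,B)$. I would then case-split on the small possibilities for $|A|$ and $|B|$, using the lower bound of two semi-abelian returns applied to the longest power $0^{a_{\max}}$ or $1^{b_{\max}}$ to eliminate the rigid subcases in which this longest power appears only once per local period (hence has a single semi-abelian return).

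For $|p|\geq 1$, the minimality of $p$ forbids any shorter palindromic factor of $p$ from having both flanking extensions in $F(w)$, and this rigidity constrains how $p$ can be extended in $w$. I would enumerate which of the four two-sided extensions $0p0$, $0p1$, $1p0$, $1p1$ lie in $F(w)$ and, fixing a factor $u$ closely tied to $p$ (for instance $p$ itself, or $0p$ or $p0$), decompose the gap words between consecutive occurrences of the abelian class of $u$ according to the surrounding extension types. Each realized extension type should contribute a distinct abelian class of return word, and the plan is to exhibit at least four such classes, contradicting the upper bound of three.

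The principal obstacle is the combinatorial case analysis required in the counting step, both in the base case and in the general case. The argument branches on which subsets of the four two-sided extensions of $p$ appear, on the possible interleavings of different run lengths, and on identities of candidate gap words modulo abelian equivalence. Throughout, the lower bound of two semi-abelian returns is used in parallel with the upper bound of three abelian returns, ruling out degenerate periodic-type configurations in which several candidate gap words would otherwise collapse into a single pattern.
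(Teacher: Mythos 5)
Your overall strategy (contradiction via a minimal palindrome $p$ with $0p0,1p1\in F(w)$) is a genuinely different route from the paper, which instead proceeds in stages: first a long case analysis on returns to short factors such as $10^{l_1}$, $0^21^2$, $0^41$ showing that one letter is isolated, then a reduction to $w\in\{0^{l_1}1,0^{l_1+1}1\}^{\omega}$, then $2$-balance via the sequence of abelian classes of length-$n$ factors, and only then the exhibition of a fourth abelian return. However, as written your proposal has a genuine gap at its decisive step. In the case $|p|\geq 1$ you assert that ``each realized extension type should contribute a distinct abelian class of return word'' and that this yields four abelian return classes to some factor $u$ tied to $p$. No argument is given for this, and the mechanism does not work as stated: the extensions $0p1$ and $1p0$ are abelian equivalent, so at most three abelian classes of two-sided extensions exist to begin with, and more importantly the abelian class of a gap word between consecutive occurrences of the class of $u$ is determined by the global structure of $w$ between those occurrences, not by the local extension type of $p$ at one end. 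Distinct local configurations can perfectly well produce abelian-equivalent gap words (this collapsing is exactly what the paper must fight against throughout its case analysis, where candidate returns $\alpha_i$ and $\alpha'_j$ repeatedly turn out to coincide modulo abelian equivalence). The paper is only able to locate a factor with four abelian returns \emph{after} it knows that $w\in\{0^{l_1}1,0^{l_1+1}1\}^{\omega}$ and is $2$-balanced, at which point the relevant class of length $n+1$ provably has returns $0$, $1$, $10$ and one longer return; your plan neither derives comparable structural information nor explains how to do without it.

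A secondary concern is that even your base case $p=\varepsilon$ is only the opening move of what, in the paper, is a multi-page analysis: after the constraint $|A|,|B|\leq 2$ on run lengths, one is left with words such as $w\in(\{0,0^2\}\{1,1^2\})^{\omega}$ or $(\{0^2,0^4\}\{1^2,1^4\})^{\omega}$, and eliminating these requires tracking returns to factors like $0^21^2$ and $0^41$ through several rounds of forced extensions before a factor with a single semi-abelian return appears. You correctly identify the right tool for killing the periodic subcases (a factor occurring once per period has one semi-abelian return), but the case analysis that makes the contradiction materialize is not carried out, so the proposal remains a plausible plan rather than a proof.
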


Notice that we formulate and prove auxiliary lemmas and propositions in a bit stronger way than we need for sufficiency in Theorem \ref {main}: instead the condition ``each factor has two or three abelian returns'' we put a weaker condition ``each factor has at most three abelian and at least two semi-abelian returns''. Using this condition we will be able to prove the sufficiency in both Theorems \ref{main} and \ref{semi-abelian}: since both words with two or three abelian returns and words with two or three semi-abelian returns satisfy this condition, we solve two problems at once.

The proof of this proposition is rather technical, it is based on
considering abelian returns to different possible factors of the
infinite word and consecutively restricting the possible form of
the word. Denote the binary word with at most three abelian
returns by $w\in \{0,1\}^{\omega}$. %First we notice that Lemma \ref {periodic} implies
%that an aperiodic word with $2$ or $3$ abelian returns must
%necessarily be binary. Let $w\in \{0,1\}^{\omega}$.
In the rest of
this section instead of abelian returns "to the left" we consider
abelian returns "to the right": if $vu$ is a factor having
$v'\sim_{ab}v$ as its suffix, and $vu$ does not contain as factors
other words abelian equivalent to $v$ besides suffix and prefix,
then the abelian class of $u$ is abelian return to the abelian
class of $v$. It is easy to see that regardless of the definition,
the set of abelian returns to each abelian factor is the same. We
will refer to the word $u$ as \emph{right semi-abelian return} of
the abelian class of $v$, so normal semi-abelian returns can be
regarded as left semi-abelian returns. Right semi-abelian returns
do not necessarily coincide with left semi-abelian returns, but
their abelian classes also give the set of abelian returns. Though
this does not make any essential difference, this modification of
the definition is more convenient for our proof of this
proposition.

\medskip

%We say that a letter $a$ is \emph{isolated} in a word
%$w\in\Sigma^{\omega}$, if $aa$ is not a factor of $w$.
We will
make use of the following key lemma:

\begin{lemma} \label {isolated}
If each factor of a binary recurrent infinite word $w$ has at most
three abelian and at least two semi-abelian
returns, then one of the letters is isolated.
\end{lemma}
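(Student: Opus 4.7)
The plan is to argue by contradiction: suppose both $00$ and $11$ occur as factors of $w$, and produce a factor that violates one of the two hypotheses. Decompose $w$ into maximal monochromatic blocks $w=X_1X_2X_3\cdots$, and let $A$ (respectively $B$) be the set of lengths of the $0$-blocks (respectively $1$-blocks). The contradiction hypothesis reads $\max A\ge 2$ and $\max B\ge 2$.

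The first step is to bound $|A|$ and $|B|$ using short factors. For the singleton $u=0$, the (semi-)abelian returns coincide with the usual return-word classes: the letter $0$ is a return exactly because $\max A\ge 2$, and each word $01^b$ with $b\in B$ is a return. These lie in pairwise distinct abelian classes (distinguished by the count of $1$s), so the number of abelian returns to $0$ equals $1+|B|$. The bound $\le 3$ then gives $|B|\le 2$, and symmetrically $|A|\le 2$. Sharpening via $u=01$: its abelian class $\{01,10\}$ is hit at every transition between blocks, and the resulting gap words are $01^{b-1}$ for $b\in B$ and $10^{a-1}$ for $a\in A$. Two such classes collide only when $a=b=2$, so the count of abelian returns to $01$ is $|A|+|B|-\varepsilon$, where $\varepsilon=1$ iff $2\in A\cap B$. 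The bound $\le 3$ then leaves only the options $|A|+|B|\le 3$ or $(|A|,|B|)=(2,2)$ with $2\in A\cap B$.

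These restrictions reduce the analysis to a few cases. If $|A|=|B|=1$, with $A=\{a\}$, $B=\{b\}$ and $a,b\ge 2$, then $w=(0^a1^b)^\omega$ up to a shift; the factor $0^a$ has the unique return $0^a1^b$, giving only one semi-abelian return and contradicting the hypothesis. In the remaining cases (one or both of $|A|,|B|$ equal $2$), my plan is to push the analysis through factors of the form $0^a 1^\beta 0^a$ for each $\beta\in B$ (and their $1$-sided analogues): occurrences of such a factor pick out precisely the indices $i$ with $b_i=\beta$, and the gap words between consecutive occurrences concatenate the intervening $0$- and $1$-blocks, their abelian classes being distinguished by total length. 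Applying the $\le 3$ bound to these witnesses controls the number of distinct ``run patterns'' inside the block-length sequence $(b_i)$, and symmetrically in $(a_i)$. Combined with the $\ge 2$ semi-abelian condition and the recurrence of $w$, I expect the block-length sequences to be forced into so rigid a shape that either the word collapses to a periodic form where some factor admits only one semi-abelian return (as in the degenerate case above), or a sufficiently long factor acquires a fourth distinct abelian return from the variety of intervening configurations.

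The principal obstacle is this last, case-analytic step: one must choose witness factors whose abelian returns faithfully reflect the block-length variation while guarding against hidden coincidences of abelian classes, and must treat aperiodic $(a_i),(b_i)$ on the same footing as periodic ones. This is the ``rather technical'' combinatorial core that the authors flag and where I expect the proof to proceed by careful bookkeeping rather than via a slick shortcut.
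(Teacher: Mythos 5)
Your opening reductions are correct: for the letter $0$ the abelian class is a singleton, so its (semi-)abelian returns are the ordinary returns $\{0\}\cup\{01^{b}:b\in B\}$, giving $|B|\le 2$ once $00$ occurs (and symmetrically $|A|\le 2$); and your count of abelian returns to the class of $01$ as $|A|+|B|-\varepsilon$, with the single possible collision $01\sim_{ab}10^{a-1}1^{b-1}$-type coincidence at $a=b=2$, is also right. This parallels the paper's first step (returns to letters) but is coarser: the paper immediately goes on to examine returns to the longer witnesses $10^{l_1}$ and $1^{j_1}0$, and the abelian coincidences among $1$, $0^{l-l_1}1$ and $1^{j-1}0^{l_1}$ force very specific numerical relations ($l_2=2l_1$, $j=2$, etc.), pinning the surviving configurations down to four explicit families such as $(\{1,2\},\{2,4\})$, $(\{1,2\},\{1,2\})$, $(\{2,4\},\{2,4\})$ and the one-singleton case. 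Your reduction still admits, e.g., $A=\{2,3\}$, $B=\{2,5\}$, so the residual case analysis you face is strictly larger than the paper's.

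The genuine gap is that this residual analysis is the proof, and you have only announced a plan for it ("my plan is to push the analysis through factors of the form $0^{a}1^{\beta}0^{a}$\dots", "I expect the block-length sequences to be forced\dots"). In the paper this is several pages of work: in the one-singleton case (their Case 1) one must show that the letter occurring in blocks of a single length is in fact isolated, which requires tracking returns to $10^{l_2}$ and $1^{j}0^{l_1}1$ and ruling out two sub-families by exhibiting a factor with a unique semi-abelian return in the periodic words that survive; in the two-by-two cases one must classify the possible long returns ($\alpha_1,\dots,\alpha_4$ and their primed counterparts), exploit which of them can be abelian-equivalent to which, and show each branch either produces a fourth abelian return or forces a periodic word containing a factor with one semi-abelian return. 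Your chosen witnesses $0^{a}1^{\beta}0^{a}$ are also problematic as stated: their abelian class contains many other factors (words with $2a$ zeros and $\beta$ ones that straddle blocks differently), so occurrences of the class do not "pick out precisely the indices $i$ with $b_i=\beta$" — this is exactly the hidden-coincidence issue you flag but do not resolve. As it stands the argument establishes the easy perimeter of the lemma and leaves its combinatorial core unproved.
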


\noindent\emph{Proof.} %In the proof of this lemma we will use the
%following definition. We say that a letter $a\in\Sigma$
%\emph{appears in} $w$ \emph{in a block of length} $k>0$, if a
%word $b a^k c$ is factor of $w$ for some letters $b\neq a$, $c\neq
%a$.
Considering abelian returns to letters, we get that every
letter can appear in blocks of at most three different lengths.
Denote these lengths for blocks of $0$'s by $l_1$, $l_2$, $l_3$,
where $l_1<l_2<l_3$, for blocks of $1$'s by $j_1$, $j_2$, $j_3$,
where $j_1<j_2<j_3$. Notice that a letter can appear in blocks of
only two or one lengths, then the third length or the third and
the second lengths are missing.

Consider right semi-abelian returns of the word $1 0^{l_1}$: they
are $1$, $0^{l-l_1} 1$ for $l=l_2$, $l_3$ (if $0$ appears in
blocks of corresponding lengths), $1^{j-1} 0^{l_1}$ for $j=j_1>1,
j_2, j_3$ (if $1$ appears in blocks of corresponding lengths) and
$0$ for $j_1=1$. Some of these returns should be missing or
abelian equivalent to others in order to have at most three
abelian returns totally. So we have the following cases:

\smallskip

\noindent -- $j_2$, $j_3$, $l_3$ are missing, i.e., $w \in \{
0^{l_1}1^{j_1}, 0^{l_2}1^{j_1} \}^{\omega}$. In this case abelian
returns are $1$, $0^{l_2-l_1} 1$, and  $1^{j_1-1}0^{l_1}$ for
$j_1>1$ or $0$ for $j_1=1$.

\noindent -- $l_2$, $l_3$, $j_3$ are missing, i.e., $w \in \{
0^{l_1}1^{j_1}, 0^{l_1}1^{j_2} \}^{\omega}$. Abelian returns are
$1$, $1^{j_2-1}0^{l_1}$, and $1^{j_1-1}0^{l_1}$, if $j_1>1$, or
$0$, if $j_1=1$.

\noindent -- $j_2$, $j_3$ are missing, $j_1=2$, $l_2=2l_1$ or
$l_3=2l_1$, i.e., $w \in ( \{ 0^{l_1}, 0^{2l_1}, 0^{l}\} 1^{j_2}
)^{\omega}$. Abelian returns are $1$, $0^{l_1} 1$, $0^{l-l_1} 1$.

\noindent -- $l_3$, $j_3$ are missing, $l_2=2l_1$, $j_1=2$ or
$j_2=2$, $w \in ( \{ 0^{l_1}, 0^{2l_1}\} \{1^{2}, 1^{j}\})^{\omega}$. Abelian returns are $1$,
$0^{l_1} 1$, $1^{j-1}0^{l_1}$ (if $j>1$) or $0$ (if $j=1$).

\noindent -- $j_2$, $l_2$, $j_3$, $l_3$ are missing, then $w = (
0^{l_1} 1^{j} )^{\omega}$ is periodic. This case is impossible
since $0^{l_1}$ has only one semi-abelian return.

\smallskip

Notice that the first two cases are symmetric. Considering abelian
returns to the word $1^{j_1}0$, we get symmetric cases ($0$ change
places with $1$, $j_k$ change places with $l_k$, $k=1,2,3$).
Combining the cases obtained by considering abelian returns to $1 0^{l_1}$ with the cases obtained by considering abelian returns to $1^{j_1}0$,
we finally get the following remaining cases (up to renaming letters):

\medskip

\noindent 1) $j_2$, $j_3$, $l_3$ are missing, i.e. $w$ is of the
form  $w\in \{ 0^{l_1}1^{j_1}, 0^{l_2}1^{j_1}\}^{\omega}$.

\smallskip

\noindent 2) $l_3$, $j_3$ are missing, $l_1=1$, $l_2=2$, $j_1=2$,
$j_2=4$, i.e. $w\in (\{ 0, 0^2\}\{1^2, 1^4\})^{\omega}$.
\smallskip

\noindent 3) $l_3$, $j_3$ are missing, $l_1=1$, $l_2=2$, $j_1=1$,
$j_2=2$, i.e. $w\in (\{ 0, 0^2\}\{1, 1^2\})^{\omega}$.

\smallskip

\noindent 4) $l_3$, $j_3$ are missing, $l_1=2$, $l_2=4$, $j_1=2$,
$j_2=4$.\, i.e.
 $w\in (\{ 0^2, 0^4\}\{1^2, 1^4\})^{\omega}$.

\medskip

\noindent \textbf{Case 1)}: $w\in \{ 0^{l_1}1^j_1,
0^{l_2}1^j_1\}^{\omega}$.

In the first case we should prove that $j_1=1$. We omit the index
$1$ for brevity: $j=j_1$. Suppose that $j>1$. Consider right
abelian returns to the word $1 0^{l_2}$. They are $1$,
$1^{j-1}(0^{l_1}1^j)^k 0^{l_2}$ for all $k\geq 0$ such that the
word $0^{l_2}1^j(0^{l_1}1^j)^k 0^{l_2}$ is a factor of $w$.
Therefore, we have at most two values of $k$ (probably, including
$0$).

Right abelian returns to the word $1^j 0^{l_1} 1$ are $1$,
$(0^{l_2}1^j )^m 0^{l_1} 1$ for all $m\geq 0$ such that the word
$10^{l_1}1^j(0^{l_2}1^j)^m 0^{l_1} 1$ is a factor of $w$. So, we
have at most two values of $m$ (probably, including $0$).

Notice that we cannot have only one value of $k$ and only one
value of $m$ simultaneously, since in this case we have periodic
word $w= ( (0^{l_1} 1^j)^{k_1} (0^{l_2}1^j)^{m_1} )^{\omega}$, and
the word $(0^{l_2}1^{1^j})^{m_1-1}0^{l_2}$ has only one
semi-abelian return.

Taking into account conditions for $m$ and $k$, which we have just
obtained from considering abelian returns to both $1 0^{l_2}$ and
$1^j 0^{l_1} 1$, we find that there are two opportunities:

\smallskip

\noindent \textbf{Case 1a)} $w\in ( \{ (0^{l_1}1^j)^{k_1},
(0^{l_1}1^j)^{k_2} \} 0^{l_2} 1^j)^{\omega}$, $0<k_1<k_2$.
The word $0^{l_2} 1^j 0^{l_1} 1^{j-1}$ has returns $1$,
$0^{l_1}1$, $0^{l_2} ( 1^j 0^{l_1} )^{k-1}1 $ for all $k$ such
that the word $0^{l_2}1^j(0^{l_1}1^j)^k 0^{l_2}$ is a factor of
$w$. To provide at most three abelian returns, $w$ should admit
only one value of $k$. In this case there is also only one value of $m$, so the case 1a) is impossible.

\smallskip

\noindent \textbf{Case 1b)} $w\in ( 0^{l_1} 1^j, \{ (0^{l_2}1^j)^{m_1},
(0^{l_2}1^j)^{m_2} \} )^{\omega}$, $0<m_1<m_2$.
The word $1^j 0^{l_1} 1^j 0^{l_2} 1$ has returns $1$,
$10^{l_2}$, $10^{l_1} ( 1^j 0^{l_2} )^{m-1} $ for all $m$ such
that the word $10^{l_1}1^j(0^{l_2}1^j)^m 0^{l_1}1$ is a factor of
$w$. To provide at most three abelian returns, $w$ should admit
only one value of $m$. In this case there is also only one value of $k$, so the case 1b) is impossible.

\smallskip

Thus, in case 1) $1$'s are isolated.

\bigskip

In cases 2)--4) we need to consider words containing all four
blocks, otherwise we get into conditions of case 1) in which we
proved that $1$-s are isolated. The proof is similar for the three
cases, and is based on studying abelian returns of certain type.
When we examine $w\in ( \{0^{l_1}, 0^{l_2} \}, \{1^{j_1}, 1^{j_2}
\})^{\omega}$, we consider abelian returns to the words $0^{l_1}
1^{j_2}$ and $0^{l_2} 1^{j_1}$, and with a technical case study
obtain that if both words have at most three abelian returns, then
$w$ is periodic of a certain form, and then find its factor having
one semi-abelian return.

\bigskip

\noindent \textbf{Case 2)}: $w\in (\{ 0^2, 0^4\}\{1,
1^2\})^{\omega}$.

Consider abelian returns of the word $0^2 1^2$. Factors of $w$
from the abelian class of $0^2 1^2$ are the following: $0^2 1^2$,
$1^2 0^2$, $0110$, $1001$. Notice that each of these words is
necessarily a factor of $w$. Consider right semi-abelian returns
to each factor:

\begin{itemize}

\item $0^2 1^2$, $01^20$ have right semi-abelian return $0$

\item $1^2 0^2$ has right semi-abelian returns of the form
$\alpha_1=(0^210^2)^{i_1} 1$ and/or $\alpha_2= (0^2 10^2)^{i_2}
0^2 1^2$ for some $i_1, i_2\geq 0$

%\noindent $0110$ has abelian return $0$

\item $10^21$ has right semi-abelian returns of the form $\alpha_3=(0^4 1)^{i_3}
1$ and/or $\alpha_4=(0^41)^{i_4} 0^2 1$ for some $i_3, i_4\geq 0$

\end {itemize}

We will also use abelian returns of the word $0^4 1$: %, and each of them necessarily appears in $w$.

\begin{itemize}

\item  $0^4 1$ could have right semi-abelian returns $0$, returns of the
forms $\alpha'_1=(1 0^2 1)^{j_1} 0^2$ with $j_1>0$ and $\alpha'_2=
(1 0^2 1)^{j_2} 1 0^4$ for some $j_2\geq 0$

\item $0^3 1 0$, $0 1 0^3$  (not necessarily factors of $w$) have
right semi-abelian return $0$

\item $0^2 1 0^2$ could have right semi-abelian returns $0$, returns of the
forms $\alpha'_3=(1^2 0^2)^{j_3} 0^2$ with $j_3>0$ and $\alpha'_4=
(1^2 0^2)^{j_4} 1 0^2$ for some $j_4\geq 0$

%\noindent (not necessarily a factor) has abelian return $0$

\item $1 0^4$ has right semi-abelian return $1$.

\end{itemize}
\noindent These are summarized in the table below:

\medskip

\noindent \begin{tabular}{|c|l|l|} \hline
                 abelian class & word  & possible right semi-abelian returns  \\ \hline

 \multirow{3}{*}{$0^2 1^2$} & $0^2 1^2$, $01^20$ & $0$

\\ \cline{2-3}
& $1^2 0^2$ & $\alpha_1=(0^210^2)^{i_1} 1$, $\alpha_2= (0^2
10^2)^{i_2} 0^2 1^2$ for some $i_1, i_2\geq 0$
\\ \cline{2-3}
   &  $10^2 1$ & $\alpha_3=(0^4
1)^{i_3} 1$, $\alpha_4=(0^41)^{i_4} 0^2 1$ for some $i_3, i_4\geq
0$
\\ \hline \hline

 \multirow{3}{*}{$0^4 1$} & $0^4 1$ & $0$, $\alpha'_1=(1 0^2 1)^{j_1} 0^2$ with $j_1>0$, $\alpha'_2=
(1 0^2 1)^{j_2} 1 0^4$ for some $j_2\geq 0$

\\ \cline{2-3}
& $0^3 1 0$, $0 1 0^3$ & $0$
\\ \cline{2-3}
   & $0^2 1 0^2$ &  $0$, $\alpha'_3=(1^2 0^2)^{j_3} 0^2$ with $j_3>0$,
$\alpha'_4= (1^2 0^2)^{j_4} 1 0^2$ for some $j_4\geq 0$
\\ \cline{2-3}
& $10^4$ & $1$

\\ \hline

\end{tabular}

\medskip

\medskip

Notice that $\alpha_1 \sim_{ab} \alpha_3$ when $i_1=i_3$, and
$\alpha'_1 \sim_{ab} \alpha'_3$ when $j_1=j_3$.

\smallskip

If factors from the abelian class of $0^2 1^2$ have only letters
as abelian returns, then we obtain periodic word $w=(0^2
1^2)^{\omega}$, and this word does not contain all four blocks.
So, a factor from the abelian class of $0^2 1^2$ should contain an
abelian return of length longer than $1$ (referred to as
\emph{long returns} in the further text), so we consider the four
cases corresponding to returns $\alpha_1$ through $\alpha_4$.

\smallskip

\noindent \textbf{Case 2a)} let $1^2 0^2$ have a return $\alpha_1$
with $i_1>0$. Then $w$ contains a factor $u=1^2 0^2
(0^210^2)^{i_1} 1$. Now consider right semi-abelian returns to the
abelian class of $0^4 1$. One can find right semi-abelian returns
$0$ (in the factor $0^4 1 0$ of $u$) and $1$ (in $10^41$). Since
$u$ has a prefix $1^2 0^4$, it means that there is a long right
semi-abelian return ending in $1^2 0^4$, i.e., we have right
semi-abelian return $\alpha'_2$ or $\alpha'_3$. A suffix $0^210^2
1 $ of $u$ implies that there is a long right semi-abelian return
$\alpha'_3$ or $\alpha'_4$. So, the only possibility is that an
abelian class of $0^4 1$ has abelian returns $0$, $1$ and
$\alpha'_3 \sim_{ab} \alpha'_1$ with $j_1=j_3>0$, and hence
nothing else. The factor $u$ has a suffix $0^210^2 1$, so the
factor $0^210^2$ here has right semi-abelian return $\alpha'_3$,
and therefore $u$ is continued in the unique way: $u'=1^2 0^2
(0^210^2)^{i_1} (1^2 0^2)^{j_3} 0^2$. One can find here two right
semi-abelian returns $0$ and $1$ to the abelian class of $0^2
1^2$, and we started with the first long right semi-abelian return
$\alpha_1$, so the three returns to $0^2 1^2$ are $0$, $1$ and
$\alpha_1\sim_{ab} \alpha_3$. The factor $u'$ has a suffix $1^2
0^4$, so the factor $1^2 0^2$ here has right semi-abelian return
$\alpha_1$, therefore it is continued in the unique way: $u''=1^2
0^2 (0^210^2)^{i_1} (1^2 0^2)^{j_3} (0^210^2)^{i_1} 1$. Continuing
this line of reasoning, we obtain a periodic word. One can find a
factor having one semi-abelian return, e. g., $ (1^2 0^2)^{j_3-1}
1^2$. Hence $1^2 0^2$ has no long right semi-abelian returns of
the form $\alpha_1$.

\smallskip

\noindent \textbf{Case 2b)} let $1^2 0^2$ have a return $\alpha_3$
with $i_3>0$. Then $w$ contains a factor $u=1 0^2 1 (0^4 1)^{i_3}
1$. Now consider right semi-abelian returns to the abelian class
of $0^4 1$. One can find right semi-abelian returns $0$ (in the
factor $0^4 1 0$ of $u$) and $1$ (in $10^41$). Since $u$ has a
prefix $1 0^2 1 0^2$, it means that there is a long right
semi-abelian return ending in $1 0^2 1 0^2$, i.e., we have right
semi-abelian return $\alpha'_1$ or $\alpha'_4$. A suffix $0^4 1^2$
of $u$ implies that there is a long right semi-abelian return
$\alpha'_1$ or $\alpha'_2$. So, the only possibility is that an
abelian class of $1 0^4$ has abelian returns $0$, $1$ and
$\alpha'_1 \sim_{ab} \alpha'_3$ with $j_1=j_3>0$. The factor $u$
has a suffix $0^4 1^2$, so the factor $0^41$ here has right
semi-abelian return $\alpha'_1$, so $u$ is continued in the unique
way: $u'=1 0^2 1 (0^4 1)^{i_3} (1 0^2 1)^{j_1} 0^2$.
 This factor has a suffix $1 0^2 1 0^2$, so the factor $1 0^2 1$
 here has right semi-abelian return $\alpha_3$,
and therefore it is continued in the unique way: $u''=1 0^2 1 (0^4
1)^{i_3} (1 0^2 1)^{j_1} (0^4 1)^{i_3} 1$. Continuing this line of
reasoning, we obtain a periodic word. One can find a factor having
one semi-abelian return, e. g., $ (0^4 1)^{i_3-1} 0^4$.  Hence $1
0^2 1$ has no long right semi-abelian returns of the form
$\alpha_3$.

\smallskip

\noindent \textbf{Case 2c)} let $1^2 0^2$ have a return $\alpha_2$
with $i_2\geq 0$. Notice that if $1^2 0^2$ has only return $\alpha_2$, then $w=(1^2 0^2 (0^2 1 0^2)^{i_2} 0^2)^{\omega}$, and $w$ does not contain the block $0^2$. We proved that there is no long right semi-returns of the
forms $\alpha_1$ and $\alpha_3$, so the only possibility is that
$1^20^2$ has two returns $\alpha_2$ and $1$, and $10^21$ always
has return $1$, otherwise this abelian class has more than $3$
abelian returns. So, $1^20^2$ is followed by either $(0^2
10^2)^{i_2} 0^2 1^2 $ or $1$. In both cases we can determine
several next letters: in the first case the next symbols are $00$
(because $w$ contains maximum two consecutive $1$-s), in the
second case the next symbols are $100$ (since $10^21$ always has
return $1$, and $11$ is always followed by $00$). So, $1^20^2$ is
followed by either $(0^2 10^2)^{i_2} 0^2 1^2 0^2$ or $1^20^2$.
Both continuations have suffix $1^20^2$, which is followed by
either $1$ or $\alpha_2$, etc:

\begin{picture}(300,90)

\put(0,35){\makebox(50,10)[l]{$1^2 0^2$}}
\put(25,40){\line(1,1){20}} \put(25,40){\line(1,-1){20}}
\multiput(5,7)(89,10){2}{\put(45,50){\makebox(70,10)[l]{$(0^2 1
0^2)^{i_2} 0^2 1^2 0^2$}} \put(120,55){\line(1,1){10}}
\put(120,55){\line(1,-1){10}}}
\multiput(5,-7)(89,32){2}{\put(45,20){\makebox(90,10)[l]{$1^2
0^2$}} \put(68,25){\line(1,1){10}} \put(68,25){\line(1,-1){10}}}

\put(45,-17){\put(45,20){\makebox(90,10)[l]{$1^2 0^2$}}
\put(68,25){\line(1,1){10}} \put(68,25){\line(1,-1){10}}}

\put(45,-27){\put(45,50){\makebox(70,10)[l]{$(0^2 1 0^2)^{i_2} 0^2
1^2 0^2$}} \put(121,55){\line(1,1){10}}
\put(121,55){\line(1,-1){10}}}

\put(240,65){\makebox(20,10)[l]{$\dots$}}
\put(180,45){\makebox(20,10)[l]{$\dots$}}
\put(185,23){\makebox(20,10)[l]{$\dots$}}
\put(130,3){\makebox(20,10)[l]{$\dots$}}
\end{picture}

\medskip

\noindent Thus $w\in \{ (0^2 10^2)^{i_2} 0^2 1^2 0^2, 1^20^2
\}^{\omega}$. We are interested in the case when all four blocks
are contained in $w$, so we get $i_2>0$, otherwise $w$ does not
contain the block $1^1$, and we get into case 1), which we proved
is impossible.

So, $w$ contains a factor $u=1^2 0^2 (0^210^2)^{i_2} 0^2 1^2$ for
some $i_2>0$. Now consider abelian returns to the abelian class of
$0^4 1$. One can find right semi-abelian returns $0$ (in the
factor $0^4 1 0$ of $u$) and $1$ (in $10^41$). Since $u$ has a
prefix $1^2 0^4$, it means that there is a long right semi-abelian
return ending in $1^2 0^4$, i.e., we have right semi-abelian
return $\alpha'_2$ or $\alpha'_3$. A suffix $0^4 1^2$ of $u$
implies that there is a long right semi-abelian return $\alpha'_1$
or $\alpha'_2$. The only possibility is that an abelian class of
$1^2 0^2$ has abelian returns $0$, $1$ and $\alpha'_2$ with
$j_2\geq0$, and nothing else. The set of abelian returns  $0$, $1$
and $\alpha'_1\sim_{ab} \alpha'_3$ is impossible since in this
case the abelian class $1^2 0^2$ has other abelian returns than
$0$, $1$, $\alpha_2$. The factor $u$ has a suffix $0^4 1^2$, so
the factor $0^41$ here has right semi-abelian return $\alpha'_2$,
so $u$ is continued in the unique way: $u=1^2 0^2 (0^210^2)^{i_2}
0^2 1 (10^2 1)^{j_2} 0^2$. This factor has a suffix $1 0^2 1 0^2$,
but we proved above that in the case 2c) the factor $1 0^2 1$ is
always followed by $1$, so we get a contradiction. Hence $1^2 0^2$
has no returns of the form $\alpha_2$.

\smallskip

\noindent \textbf{Case 2d)} let $1 0^2 1$ have a return $\alpha_4$
with $i_4\geq 0$. Notice that if $1 0^2 1$ has only return $\alpha_4$, then $w=(0^2 1 (0^4 1)^{i_4})^{\omega}$, and $w$ does not contain the block $1^2$. We proved
that there is no long returns of the forms $\alpha_1$, $\alpha_2$
and $\alpha_3$, so the only possibility is that $10^21$ has two
returns $\alpha_4$ and $1$, and $1^20^2$ always has return $1$.
So, $10^21$ is followed by either $(0^4 1)^{i_4} 0^2 1 $ or $1$.
In the second case we can determine several next letters to be
$001$ (because and $11$ is always followed by $00$, and $1^20^2$
always has return $1$). So, $10^21$ is followed by either $(0^4
1)^{i_4} 0^2 1 $ or $10^21$. Both continuations have suffix
$10^21$, which is followed by either $(0^4 1)^{i_4} 0^2 1 $ or
$1$:

\begin{picture}(300,80)

\put(0,35){\makebox(50,10)[l]{$10^21$}}
\put(25,40){\line(1,1){15}} \put(25,40){\line(1,-1){15}}
%\multiput(0,0)(90,10){2}
{\put(45,50){\makebox(70,10)[l]{$(0^4 1)^{i_4} 0^2 1$}}
\put(95,55){\line(1,1){10}} \put(95,55){\line(1,-1){10}}}
%\multiput(0,0)(90,20){2}
{\put(45,20){\makebox(90,10)[l]{$1 0^2 1$}}
\put(70,25){\line(1,1){10}} \put(70,25){\line(1,-1){10}}}

%\put(240,60){\makebox(20,10)[l]{$\dots$}}
\put(120,50){\makebox(20,10)[l]{$\dots$}}
\put(90,20){\makebox(20,10)[l]{$\dots$}}

\end{picture}

\noindent Thus $w\in \{ (0^4 1)^{i_4} 0^2 1, 1 0^21 \}^{\omega}$.
We are interested in the case when all four blocks are contained
in $w$, so we get $i_4>0$, otherwise $w$ does not contain the
block $0^4$.

Thus $w$ contains a factor $u=1 0^2 1 (0^4 1)^{i_4} 0^2 1$. Now
consider abelian returns to the abelian class of $0^4 1$. One can
find right semi-abelian returns $0$ (in a factor $0^2 1 0^3$ of
$u$) and $1$ (in $10^41$). Since $u$ has a prefix  $1 0^2 1 0^2$,
we have a long right semi-abelian return ending in $1 0^2 1 0^2$,
i.e., $\alpha'_1$ or $\alpha'_4$. A suffix  $0^2 1 0^2 1$ of $u$
implies that there is a long right semi-abelian return $\alpha'_3$
or $\alpha'_4$ with $j_4\geq0$. The only possibility is that an
abelian class of $0^4 1$ has abelian returns $0$, $1$ and
$\alpha'_4$ with $j_4\geq0$. The set of abelian returns $0$, $1$
and $\alpha'_1\sim_{ab}\alpha'_3$ is impossible since is this case
the abelian class of $0^2 1^2$ has other abelian returns than $0$,
$1$ and $\alpha_4$.
 Considering the suffix $0^2 1 0^2 1$ of $u$, we get that the
factor $0^210^2$ here has right semi-abelian return $\alpha'_4$,
so $u$ is continued in the unique way: $u'=1 0^2 1 (0^4 1)^{i_4}
0^2 (1^2 0^2)^{j_4} 10^2$.
 The factor $u'$ has a suffix $1 0^2 1 0^2$, so the factor $1 0^2 1$
 here has right semi-abelian return $\alpha_4$,
so it is continued in the unique way: $u''=1 0^2 1 (0^4 1)^{i_4}
0^2 (1^2 0^2)^{j_4} 1(0^4 1)^{i_4} 0^2 1$. Continuing this line of
reasoning, we obtain a periodic word $w$. Its factor $(0^4
1)^{i_4-1} 0^4$ has only one semi-abelian return. Hence $1 0^2 1$
has no long returns $\alpha_4$.

\medskip

So, we are done with the case 2)

\bigskip

\noindent \textbf{Case 3)}: $w\in (\{ 0, 0^2\}\{1,
1^2\})^{\omega}$.

%First notice that we are interested in the case when all the
%blocks $0^1$, $0^2$, $1^1$, $1^2$ appear in $w$, otherwise we have
%case 1) in which we proved that $1$-s are isolated.

Consider abelian returns for the word $0^2 1$. Factors of $w$ from
the abelian class of $0^2 1$ could be the following: $1 0^2$, $0^2
1$, $010$, and each of them necessarily appears in $w$.

\begin{itemize}

\item  $1 0^2$ has right semi-abelian return $1$

\item   $0^2 1$ has right semi-abelian returns of the form $\alpha_1 =
(101)^{i_1} 0$ and/or $\alpha_2 = (101)^{i_2} 1 0^2$ for some
$i_1, i_2\geq 0$.

\item   $010$ has right semi-abelian returns of the form $\alpha_3 =
(110)^{i_3} 0$ and/or $\alpha_4 = (110)^{i_4} 10$ for some $i_3,
i_4\geq 0$.

\end{itemize}

Symmetrically, we get possible abelian returns for $1^2 0$:

\begin{itemize}

\item   $0 1^2$ has right semi-abelian return $0$

\item   $1^2 0$ has right semi-abelian returns of the form $\alpha'_1 =
(010)^{j_1} 1$ and/or $\alpha'_2 = (010)^{j_2} 0 1^2$ for some
$j_1, j_2\geq 0$.

\item   $101$ has right semi-abelian returns of the form $\alpha'_3 =
(001)^{j_3} 1$ and/or $\alpha'_4 = (001)^{j_4} 01$ for some $j_3,
j_4\geq 0$.

\end{itemize}

\noindent These are summarized in the table below:

\medskip

\noindent \begin{tabular}{|c|l|l|} \hline
                 abelian class & word  & possible right semi-abelian returns  \\ \hline

 \multirow{3}{*}{$0^2 1$} & $1 0^2$ & $1$

\\ \cline{2-3}
& $0^2 1$ & $\alpha_1 = (101)^{i_1} 0$, $\alpha_2 = (101)^{i_2} 1
0^2$ for some $i_1, i_2\geq 0$
\\ \cline{2-3}
   &  $010$ & $\alpha_3 =
(110)^{i_3} 0$, $\alpha_4 = (110)^{i_4} 10$ for some $i_3, i_4\geq
0$
\\ \hline \hline

 \multirow{3}{*}{$1^2 0$} & $0 1^2$ & $0$

\\ \cline{2-3}
   & $1^2 0$ &  $\alpha'_1 =
(010)^{j_1} 1$, $\alpha'_2 = (010)^{j_2} 0 1^2$ for some $j_1,
j_2\geq 0$
\\ \cline{2-3}
& $101$ & $\alpha'_3 = (001)^{j_3} 1$, $\alpha'_4 = (001)^{j_4}
01$ for some $j_3, j_4\geq 0$

\\ \hline

\end{tabular}

\medskip

Notice that $\alpha_1 \sim_{ab} \alpha_3$ when $i_1=i_3$, and
$\alpha'_1 \sim_{ab} \alpha'_3$ when $j_1=j_3$. In this case
the lengths of blocks of $0$'s and $1$'s are the same, so we can
use symmetry in the proofs.

If factors from the abelian class of
$0^2 1$ have only letters as abelian returns, then $w=(0^2 1)^{\omega}$, and this word does not contain all four blocks. So, a factor from the abelian class of
$0^2 1$ should contain a long abelian return (of length longer
than $1$), so we consider the four cases corresponding to long
returns $\alpha_1$--$\alpha_4$.

\smallskip

\noindent \textbf{Case 3a)} let $0^2 1$ have a return $\alpha_1$
with $i_1>0$. Then $w$ contains a factor $u=0^2 1 (101)^{i_1} 0$.
Now consider abelian returns to the abelian class of $1^2 0$. One
can find right semi-abelian returns $1$ (in a factor $1101$) and
$0$ (in $0110$). Since $u$ has a prefix  $0 0 1^2$, it means that
there is a long right semi-abelian return ending in $0^2 1^2$, i.
e., $\alpha'_2$ or $\alpha'_3$. A suffix  $1 0 1 0 $ of $u$
implies that there is a long right semi-abelian return $\alpha'_3$
or $\alpha'_4$. So, the only possibility is that an abelian class
of $1^2 0$ has abelian returns $0$, $1$ and $\alpha'_3 \sim_{ab}
\alpha'_1$ with $j_1=j_3>0$. Considering the suffix $1 0 1 0 $ of
$u$, we get that the factor $101$ here has right semi-abelian
return $\alpha'_3$, so $u$ is continued in the unique way: $u'=0^2
1 (101)^{i_1} (001)^{j_3} 1$. One can find in $u'$ abelian returns
$0$ and $1$ to the abelian class of $0^2 1$, and we started with
the long return $\alpha_1\sim_{ab} \alpha_3$. The factor $u'$ has
a suffix $0^2 1^2$, so the factor $001$ here has right
semi-abelian return $\alpha_1$, and hence $u'$ is continued in the
unique way: $u''=0^2 1 (101)^{i_1} (001)^{j_3} (101)^{i_1} 0$.
Continuing this line of reasoning, we obtain a periodic word, in
which the abelian class of $1 (101)^{i_1}$ has one semi-abelian
return. Hence $0^2 1$ has no long returns $\alpha_1$, and
symmetrically $1^2 0$ has no long returns $\alpha'_1$.

\smallskip

\noindent \textbf{Case 3b) }let $0 1 0$ have a return $\alpha_3$
with $i_3>0$. Then $w$ contains a factor $u= 0 1 0 (110)^{i_3} 0$.
Now consider abelian returns to the abelian class of $1^2 0$. One
can find right semi-abelian returns $1$ (in a factor $1011$) and
$0$ (in $0110$). Due to the prefix  $0 1 0 1$ of $u$, there is a
long right semi-abelian return ending in $0 1 0 1$, i.e.,
$\alpha'_1$ or $\alpha'_4$. The suffix  $1 1 0 0 $ of $u$ implies
that there is a long right semi-abelian return $\alpha'_1$ or
$\alpha'_2$. We proved that there are no long returns of the form
$\alpha'_1$, so $1^2 0$ has right semi-abelian returns $0$, $1$,
$\alpha'_4$, $\alpha'_2$. None of them are abelian equivalent, a
contradiction. Hence $0^21$ has no returns of the form $\alpha_3$,
and symmetrically $1^20$ has no returns $\alpha'_3$.

\smallskip

\noindent \textbf{Case 3c) }let $0^2 1$ have a return $\alpha_2$.
The abelian class of $001$ always has abelian return $1$. If $0^2
1$ has only return $\alpha_2$, then $w= ( (1 0 1)^{i_2} 1 0^2 1
)^{\omega}$, and the factor $0^2$ has only one abelian return. So,
$0^2 1$ has also other abelian returns. Taking into account that
there are no long returns of the forms $\alpha_1$ and $\alpha_3$,
and $\alpha_2$ is never abelian equivalent to $\alpha_4$, we get
that there should be abelian return $0$. Hence, there is no
abelian return $\alpha_4$ and $010$ is always followed by $0$,
$0^21$ is followed by either $0$ or $\alpha_2$. So, $w$ contains a
factor $u= 0^2 1 (101)^{i_2} 1 0^2$, $i_2\geq 0$. Now consider
abelian returns to the abelian class of $1^2 0$. Since $u$ has a
prefix $0^2 1^2$, it means that there is a long right semi-abelian
return ending in $0^2 1^2$, i.e., we have right semi-abelian
return $\alpha'_2$ or $\alpha'_3$. A suffix $1^2 0^2$ of $u$
implies that there is a long right semi-abelian return $\alpha'_1$
or $\alpha'_2$. We proved that we never have long return
$\alpha'_1$, so we have right semi-abelian return $\alpha'_2$.
Symmetrically to what we proved above, we get that $101$ is always
followed by $1$, $110$ is followed by either $1$ or $\alpha'_2$.
So, the last occurrence of $110$ in $u$ is extended by
$\alpha'_2$, i.e. we get the unique extention of $u$: $u'= 0^2
1 (101)^{i_2} 1 0 (010)^{j_2} 0 1^2$. Considering the last
occurrence of the factor $001$ in $u'$, we get that it should have
right semi-abelian return $\alpha_2$, i.e. we get the unique
extention: $u''= 0^2 1 (101)^{i_2} 1 0 (010)^{j_2} 0 1
(101)^{i_2} 1 0^2$. Continuing this line of reasoning, we get a
periodic word, in which the factor $0 (010)^{j_2} 0$ has only one
semi-abelian return. Hence we have no returns of the form
$\alpha_2$ and $\alpha'_2$.

\smallskip

\noindent \textbf{Case 3d)} In the remaining case the word $0 1 0$
has returns $0$ and  $\alpha_4$ with $i_4\geq0$, and the word $1 0
1$ has returns $1$ and  $\alpha'_4$ with $j_4\geq0$. So, $w$
contains a factor $u= 010 (110)^{i_4} 10$. Considering the last
occurrence of $101$ in $u$, we see that it has return $\alpha'_4$,
so $u$ is extended in the following way: $010 (110)^{i_4} 1
(001)^{j_4} 01$. The last occurrence of $010$ in this word
necessarily has right semi-abelian return $\alpha_4$, so the word
is extended uniquely as follows: $010 (110)^{i_4} 1 (001)^{j_4} 0
(110)^{i_4} 10$. Continuing this line of reasoning, we get a
periodic word. In this word $i_4> 0$, otherwise we do not have
occurrences of the block $1^2$, and the abelian class of
$(110)^{i_4}1$ has only one semi-abelian return.

So, we are done with the case 3)

\bigskip

\noindent \textbf{Case 4)} $w\in (\{ 0^2, 0^4\}\{1^2,
1^4\})^{\omega}$

This case is considered in exactly the same way as the case 3) by
considering abelian returns to $0^4 1^2$ and $0^2 1^4$. The only
changes which should be done are doubling $0$'s and $1$'s
everywhere except returns of length $1$ (letters). \qed

\medskip

\begin{lemma}\label{l1-l2}
If $w\in \{ 0^{l_1}1, 0^{l_2}1 \}^{\omega}$, $0<l_1<l_2$ is a
recurrent word such that each of its factors has at most three abelian returns
and at least two semi-abelian returns, then $l_2=l_1+1$.
\end{lemma}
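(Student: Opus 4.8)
The plan is to assume $l_2\ge l_1+2$ and derive a contradiction, leaving $l_2=l_1+1$ as the only possibility. Throughout I use that $w$ is a concatenation of the two blocks $0^{l_1}1$ and $0^{l_2}1$, so it is determined by its sequence of block lengths (a word over $\{l_1,l_2\}$). The key bookkeeping object is the set $K$ of all numbers $k\ge 0$ of consecutive short blocks $0^{l_1}1$ occurring between two successive long blocks $0^{l_2}1$. By recurrence both block types occur (otherwise $w=(0^{l_1}1)^{\omega}$ or $(0^{l_2}1)^{\omega}$ is periodic and the single block has only one semi-abelian return), so $K\neq\emptyset$.

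First I would pin down $\card(K)$ using purely--zero factors. Since every maximal run of $0$'s has length $l_1$ or $l_2$ and $l_2>l_1$, the only occurrences of $0^{l_2}$ are the full long blocks, and the return between two successive ones is $0^{l_2}1(0^{l_1}1)^{k}$ for the intervening gap $k\in K$; these classes are pairwise distinct (distinct numbers of $1$'s), so the abelian returns of $0^{l_2}$ are in bijection with $K$, giving $\card(K)\le 3$. Likewise the occurrences of $0^{l_1+1}$ (a factor since $l_2\ge l_1+2$) lie inside long blocks only; consecutive occurrences inside a block give the return $0$, while the block--to--block returns $0^{l_1+1}1(0^{l_1}1)^{k}$ again run over $K$, so $0^{l_1+1}$ has $1+\card(K)$ abelian returns, whence $\card(K)\le 2$. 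Finally, if $\card(K)=1$ then all gaps are equal, $w$ is periodic, and $0^{l_2}$ occurs once per period, i.e.\ it has a single semi-abelian return, contradicting the hypothesis of at least two. Therefore $\card(K)=2$.

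The crux is then to produce a single factor with at least four abelian returns. I would take $v=0^{l_2}1$ (and, in the range $l_1+2\le l_2\le 2l_1$, the slightly longer $v=0^{l_2}10$). Its occurrences are exactly the windows of length $|v|$ containing a single $1$, obtained by surrounding one letter $1$ by zeros drawn from the two adjacent blocks; hence the occurrences group into \emph{clusters}, one per $1$ that has enough zero--room on both sides. Inside a cluster the occurrence positions are consecutive, so each cluster contributes the abelian return $0$. The transition from the last occurrence of one cluster to the first occurrence of the next is a \emph{long} return whose abelian class records how many short blocks were skipped; distinct gap values in $K$ yield distinct long classes, while the two clusters attached to the two ends of a long block contribute one further class (the letter $1$ when $l_2>2l_1$, or the short transition $01$ when $l_2\le 2l_1$). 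Counting, $v$ has the return $0$, one distinct long return for each element of $K$, and one extra class, i.e.\ at least $2+\card(K)=4$ abelian returns, contradicting the bound of three.

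The main obstacle is precisely this cluster/transition bookkeeping. One must describe the occurrence positions of $v$ exactly (their number is $a+b-l_2+1$ for a $1$ sitting between blocks of lengths $a,b$), identify when a $1$ qualifies, namely $a+b\ge l_2$ — which is what separates the cases $l_2>2l_1$ and $l_2\le 2l_1$ and dictates the choice of $v$ — and verify that the long returns are genuinely indexed by $K$ with no accidental coincidences among the resulting abelian classes. Once the count $2+\card(K)=4$ is established the contradiction is immediate, so the assumption $l_2\ge l_1+2$ is untenable and $l_2=l_1+1$.
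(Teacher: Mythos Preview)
Your reduction to $\card(K)=2$ via the factor $0^{l_1+1}$ is correct and matches the paper's first move. The gap is in the final step: your count of $2+\card(K)$ abelian returns for $v$ silently assumes $0\notin K$. When $K=\{0,k_2\}$ --- meaning two long blocks can be adjacent --- the gap value $k=0$ does not generate a separate long return between an LS cluster and an SL cluster; there is no LS/SL pair for that gap, only an LL $1$, and the transition across it is already your ``extra class.'' So your factor $v$ has exactly three abelian returns, not four, and no contradiction arises. (There is also a secondary slip: $v=0^{l_2}10$ still allows SS $1$'s to carry clusters whenever $l_2\le 2l_1-1$, since then $2l_1\ge l_2+1$; one needs $|v|\ge 2l_1+2$, e.g.\ the paper's length $l_1+l_2$. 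But this is easily repaired, whereas the $0\in K$ issue is not.)

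The paper closes this hole by running a dual argument. Besides bounding the short-block gaps $K$ via $0^{l_1+1}$, it bounds the dual set $J$ of long-block runs between consecutive short blocks via the factor $10^{l_1}10$, and rules out $|K|=|J|=1$ by periodicity. It then splits into two cases: when there are two positive values $0<k_1<k_2$ in $K$ it exhibits four abelian returns for $0^{l_2}10^{l_1-1}$ (this is essentially your argument, with the right $|v|$), while the situation $0\in K$ forces two positive values in $J$ and is handled symmetrically with the factor $10^{l_2}10^{l_1}10$. Your one-sided approach through $K$ alone cannot finish the $0\in K$ case; you need either the dual bookkeeping via $J$ or a genuinely different witness factor for that regime.
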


\noindent\emph{Proof.} Suppose that $l_2>l_1+1$. Consider abelian
returns to the word $0^{l_1+1}$: it has right abelian returns $0$
and $1(0^{l_1}1)^k 10^{l_1+1}$ for all $k\geq0$ such that $0^l_2 1
(0^{l_1}1)^k  0^{l_2}$ is a factor of $w$, thus there could be at
most two different values of $k$ (probably, including $0$).
Consider abelian returns to the word $10^{l_1}10$: it has right
abelian returns $0$ and $(0^{l_2-1}10)^j 0^{l_1-1}1$ for all
$j\geq0$ such that $1 0^{l_1} 1 (0^{l_2}1)^j 0^{l_1} 1$ is a
factor of $w$, thus there could be at most two different values of
$j$ (probably, including $0$). If we have only one value of $k$
and one value of $j$ simultaneously, then $w$ is periodic, $w=(
(0^{l_1}1)^{k_1} (0^{l_2}1)^{j_1})^{\omega}$. In this periodic
word if $k_1=0$, then the factor $0^l_2$ has one semi-abelian
return, if $k_1>0$, then the abelian class of $1(0^{l_1}1)^{k_1}$
has only one semi-abelian return. So, we have two cases:

\smallskip

\noindent Case I: $w\in (0^{l_2}1 \{ (0^{l_1}1)^{k_1},
(0^{l_1}1)^{k_2} \})^{\omega}$, $0<k_1<k_2$. In this case one can
find four abelian returns to $0^{l_2}10^{l_1-1}$: $0$,
$10^{l_1-1}$, $(10^{l_1})^{k_1-1} 10^{l_2-1}$, $(10^{l_1})^{k_2-1}
10^{l_2-1}$.

\smallskip

\noindent Case II: $w\in (0^{l_1}1 \{ (0^{l_2}1)^{j_1},
(0^{l_2}1)^{j_2} \})^{\omega}$, $0<j_1<j_2$. In this case one can
find four abelian returns to $10^{l_2}10^{l_1}10$: $0$,
$0^{l_2-1}1$, $(0^{l_2-1}10)^{j_1-1} 0^{l_1-1}1$,
$(0^{l_2-1}10)^{j_2-1} 0^{l_1-1}1$. \qed

\bigskip

The proofs of Lemma \ref{isolated} and Lemma \ref{l1-l2} immediately imply

\begin{corollary}\label{w} If each factor of an infinite binary recurrent word $w$
has at most three abelian returns and at least two semi-abelian
returns, then $w\in \{ 0^{l_1}1, 0^{l_1+1}1 \}^{\omega}$.
\end{corollary}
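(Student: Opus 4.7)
The plan is to derive the Corollary as an essentially immediate consequence of the two preceding lemmas. First I would apply Lemma \ref{isolated} to conclude that, up to renaming letters, the letter $1$ is isolated in $w$; that is, $w$ admits a factorization into blocks of the form $0^{\ell}1$ with $\ell \geq 1$. In fact, inspecting the case analysis inside the proof of Lemma \ref{isolated}, only Case 1) survives the elimination argument, which furnishes not merely the isolation of $1$ but the stronger structural conclusion $w \in \{0^{l_1}1,\ 0^{l_2}1\}^{\omega}$ for some integers $0 < l_1 \leq l_2$.

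Next I would rule out the degenerate subcase $l_1 = l_2$. If the two block lengths coincided, then $w$ would be the purely periodic word $(0^{l_1}1)^{\omega}$, and the factor $1$ (equivalently, the factor $0^{l_1}$) would admit the unique semi-abelian return $10^{l_1}$ (resp.\ $0^{l_1}1$). This contradicts the standing hypothesis that every factor of $w$ has at least two semi-abelian returns. Hence $0 < l_1 < l_2$.

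Finally I would invoke Lemma \ref{l1-l2}, whose hypotheses now hold verbatim for $w$, to deduce that $l_2 = l_1 + 1$. This yields $w \in \{0^{l_1}1,\ 0^{l_1+1}1\}^{\omega}$, completing the argument.

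The step expected to require the most care is the first one: Lemma \ref{isolated} as formally stated delivers only the isolation of a single letter, whereas the argument needs the stronger two-block-length conclusion. So one must double-check, by revisiting the proof of Lemma \ref{isolated}, that Cases 2)--4) are genuinely eliminated in the presence of \emph{both} the ``at most three abelian returns'' and the ``at least two semi-abelian returns'' conditions. Once this bookkeeping is confirmed, together with Lemma \ref{l1-l2} the remainder of the proof is a single-line combination.
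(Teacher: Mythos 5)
Your proposal is correct and follows essentially the same route as the paper, which derives the corollary directly from the case analysis inside the proof of Lemma \ref{isolated} (only Case 1, with $j_1=1$ and two distinct block lengths, survives) combined with Lemma \ref{l1-l2}. Your explicit elimination of the degenerate case $l_1=l_2$ is already handled inside the proof of Lemma \ref{isolated} (the purely periodic case $(0^{l_1}1^{j})^{\omega}$ is excluded there because $0^{l_1}$ would have only one semi-abelian return), but spelling it out does no harm.
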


\begin{lemma}\label{2-balanced}
If each factor of a recurrent infinite binary word $w$ has at most
three abelian returns and at least two semi-abelian returns, then
$w$ is $2$-balanced.
\end{lemma}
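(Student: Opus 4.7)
By Corollary~\ref{w}, we may assume $w\in\{A,B\}^\omega$ with $A=0^l 1$ and $B=0^{l+1}1$ for some integer $l\geq 1$; if only one of $A,B$ occurs then $w$ is periodic and trivially $2$-balanced, so assume both occur. Let $\bar w\in\{a,b\}^\omega$ be the induced word obtained by the coding $A\mapsto a$, $B\mapsto b$. My plan has two steps: first, reduce $2$-balance of $w$ to $1$-balance of $\bar w$; second, establish $1$-balance of $\bar w$ from the abelian-return hypothesis.

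For the reduction, list the positions of the $1$'s in $w$ as $p_1<p_2<\cdots$, so that $p_{i+1}-p_i = l+1+\epsilon_i$ with $\epsilon_i\in\{0,1\}$; the sequence $(\epsilon_i)$ is precisely $\bar w$. Given two factors $u,v$ of $w$ of common length $n$ with $|u|_1\geq|v|_1+3$, pinning down (via suitable indices $j_1,j_2$) the first and last $1$ in each factor and comparing the two resulting estimates for $n$ yields
\[
|\bar w[j_2,\,j_2+|v|_1]|_b \;-\; |\bar w[j_1+1,\,j_1+|u|_1-1]|_b \;\geq\; 2 + \bigl(|u|_1-|v|_1-2\bigr)(l+1) \;\geq\; l+3.
\]
Applying $1$-balance of $\bar w$ to compare the left-hand $b$-counts (by matching lengths against a prefix of the longer factor) bounds the left side by $1$, a contradiction since $l\geq 1$. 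Hence $w$ is $2$-balanced as soon as $\bar w$ is $1$-balanced.

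The main work is therefore to prove $\bar w$ is $1$-balanced. Suppose it is not: by the standard characterization of $1$-balance, there is a shortest $Y\in\{a,b\}^*$ with $aYa, bYb\in F(\bar w)$, and minimality forces $Y$ to be a palindromic bispecial factor of $\bar w$. Substituting blocks yields both $AY'A = 0^l 1\,Y'\,0^l 1$ and $BY'B = 0^{l+1}1\,Y'\,0^{l+1}1$ as factors of $w$, where $Y'$ denotes the block expansion of $Y$. To reach a contradiction, I will exhibit a factor of $w$ admitting at least four pairwise non-abelian-equivalent returns; a natural candidate is an extension of $1Y'$, for example $f=1\,Y'\,0^l$ sitting inside $AY'A$. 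The coexistence of the $A$- and $B$-extensions forces right semi-abelian returns to $f$ of several distinct $0$-counts: a short return of length~$1$ from a block-internal transition inside $Y'$, returns of lengths approximately $|A|$ and $|B|$ from the $A$- and $B$-continuations, and a longer return from crossing an intermediate block of the opposite type. These four classes are distinguished by their $0$-counts, contradicting the hypothesis of at most three abelian returns.

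The principal obstacle is the combinatorial verification of the previous paragraph: one must check that the four candidate return classes are genuinely realized and remain pairwise abelian-inequivalent in every configuration allowed by Corollary~\ref{w}. This case analysis runs in the spirit of Lemma~\ref{isolated}, but at the block scale of $\bar w$ rather than the letter scale of $w$; the minimality of $Y$ (to prevent collapse among the candidate returns) and the hypothesis ``at least two semi-abelian returns'' (to rule out periodic subconfigurations) are both essential here.
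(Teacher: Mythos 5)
There is a genuine gap, and it sits exactly where the difficulty of the lemma lives. Your Step 2 --- establishing that the derived word $\bar w$ is $1$-balanced --- is only a plan, not a proof: you name a candidate factor $f=1\,Y'\,0^l$, assert that it ``should'' admit four pairwise abelian-inequivalent returns, and then explicitly defer ``the combinatorial verification'' as ``the principal obstacle.'' That verification is the entire content of the lemma (and more: $1$-balance of $\bar w$ implies $1$-balance of $w$, since the coding $a\mapsto 0^l1$, $b\mapsto 0^{l+1}1$ is a Sturmian morphism, so you are in effect attempting the whole of Proposition~\ref{sufficiency} inside this lemma). It is far from clear that your four candidate return classes are all realized, or remain distinct, for an arbitrary palindromic bispecial $Y$; deciding which words abelian-equivalent to $f$ occur, and at what gaps, requires knowing the abelian class structure of $w$ at length $|f|$ --- which is essentially the $2$-balance you are trying to prove. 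Your Step 1 reduction (from $2$-balance of $w$ to $1$-balance of $\bar w$) is believable in spirit, though the displayed inequality compares $b$-counts of factors of $\bar w$ of \emph{different} lengths, so the appeal to $1$-balance needs the extra ``match lengths against a prefix'' step spelled out; but this is a minor point next to the missing Step 2.

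For contrast, the paper's argument is self-contained and avoids the derived word entirely. Fixing a length $n$ and letting $A,B,C$ be the abelian classes of $F_n(w)$ ordered by $1$-count, it studies the sequence $\xi^{(n)}$ of classes of consecutive length-$n$ factors, observes that a pattern $CB^{j_1}A^{j_2}B$ must occur, translates this into letter identities $w_i\cdots=1u1v0$ versus $w_{i+n}\cdots=0u0v1$, and uses Corollary~\ref{w} (the suffix $0^{l_1}$ of $u$) to force $j_2=1$. This pins down the abelian returns of the middle class $B$ as $\{0,1,01\}$, all of length at most $2$, which prevents $\xi^{(n)}$ from ever reaching a fourth class --- hence at most three classes per length, i.e.\ $2$-balance. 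Note also that the paper deliberately stops at $2$-balance here and extracts the structural byproduct (the returns of $B$ and the isolation of $A$ and $C$ in $\xi^{(n)}$) that is then used to upgrade to full balance in Proposition~\ref{sufficiency}. If you want to salvage your route, you would need to carry out the return-word case analysis for $f$ in full, at the level of detail of Lemma~\ref{isolated}; as written, the proposal does not prove the statement.
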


\noindent\emph{Proof.} For a length $n$, consider abelian classes
of factors of length $n$ of $w$. Denote by $A$ the
abelian class of factors containing the smallest number of $1$-s:
$A=\{ u \in F_n(w): |u|_1=\min_{v\in F_n(w)} |v|_1 \}$. The next
class we denote by $B$: $B=\{ u \in F_n(w): |u|_1=\min_{v\in
F_n(w)} |v|_1+1 \}$, the next one by $C$. If $w$ has only two
abelian classes, then it is Sturmian, so we are interested in the
case when $w$ has at least three abelian classes. For a length
$n$, we associate to a word $w$ a word $\xi^{(n)}$ over the
alphabet of abelian classes of $w$ of length $n$ as follows: for
an abelian class $M$ of words of length $n$, $\xi^{(n)}_k=M$ iff
$w_k\dots w_{k+n-1}\in M$. In other words,
$(\xi^{(n)}_k)_{k\geq0}$ is the sequence of abelian classes of
consecutive factors of length $n$ in $w$.

It is easy to see that $\xi^{(n)}$ contains the following sequence
of classes: $CB^{j_1}A^{j_2}B$ for some $j_1, j_2\geq 1$, i.e.
for some $i$ we have $\xi^{(n)}_i\dots \xi^{(n)}_{i+j_1+j_2+1} =
CB^{j_1}A^{j_2}B$. Then we have
$$
\begin{aligned} &w_i =1, w_{i+n}=0,\\
&w_{k}=w_{k+n} \mbox{ for } k=i+1, \dots, i+j_1-1, \\
&w_{i+j_1} =1, w_{i+j_1+n}=0, \\
&w_{k}=w_{k+n} \mbox{ for } k=i+j_1+1, \dots, i+j_1+j_2, \\
&w_{i+j_1+j_2} =0, w_{i+j_1+j_2+n}=1. \end{aligned}
$$
I. e., $w_i
\dots w_{i+j_1+j_2} = 1 u 1 v 0$, $w_{i+n} \dots w_{i+j_1+j_2+n} =
0 u 0 v 1$.

By Corollary \ref{w} we have $w\in \{ 0^{l_1}1,
0^{l_1+1}1 \}^{\omega}$, so $|u|\geq 2l_1+1$; $u$ contains both
letters $0$ and $1$ and has a suffix $0^{l_1}$. It follows that
$j_2=1$. So, the class $B$ has the following $3$ abelian returns:
$0, 1, 01$. All the returns are of length at most $2$, so if after
an occurrence of $B$ we have $C$, then the next class is $B$
again, otherwise we will get a longer return. So there are no
other classes than these. In addition, we proved that if for
length $n$ there are three abelian classes, then in $\xi^{(n)}$
letters $A$ and $C$ are isolated. \qed

%\begin{lemma}\label{1-balanced}
%If each factor of a recurrent infinite binary word $w$ has
%two or three abelian returns, then $w$ is $1$-balanced.
%\end{lemma}

\bigskip

\noindent\emph{Proof of Proposition \ref {sufficiency}.} By
Corollary \ref {w} and Lemma \ref {2-balanced}, we have that $w$
is $2$-balanced and it is of the form $\{ 0^{l_1}1, 0^{l_1+1}1
\}^{\omega}$ for some integer $l_1$. Suppose that $w$ is not
balanced. Then there exists $n$ for which there exist three
classes of abelian equivalence in $F_n(w)$; as above, denote these
classes by $A$, $B$ and $C$. Arguing as in the proof of Lemma \ref
{2-balanced}, consider a sequence of classes $BCB^{j}AB$ which we
necessarily have in $\xi^{(n)}$ for some integer $j$, denote its
starting position by $i-1$. Corresponding factor in $w$ is
$$
\begin{aligned} &w_{i-1}=0, w_{i-1+n}=1, \\
&w_i =1, w_{i+n}=0,\\
&w_{k}=w_{k+n} \mbox{ for } k=i+1, \dots i+j-1, \\
&w_{i+j} =1, w_{i+j+n}=0, \\
&w_{i+j+1}=0, w_{i+j+1+n}=1. \end{aligned}
$$
I. e., $w_i \dots w_{i+j+1} = 1 u 10$, $w_{i+n} \dots w_{i+j+1+n}
= 0 u 01$. Remark that $u=w_{i+1}\dots w_{i+j}$ has prefix
$0^{l_1}10$.

Now consider abelian returns to an abelian class $B0=A1$ of length
$n+1$. The factor starting from the position $i+1$ is of the form
$B0$ so it belongs to this class, and has an abelian return $0$.
The word starting from the position $i+j$ is of the form $B0$ and
has an abelian return $1$. The word starting from the position
$i+l_1-1$ belongs to this class, and has an abelian return $10$.
So we have at least three returns $0$, $1$ and $10$. Now consider
the occurrence of class $B0=A1$ to the left from the position
$i+1$. One can see that the positions $i$ and $i-1$ are from the
class $B1=C0$, so the preceding occurrence of $B0=A1$ has an
abelian return of length greater than $2$, which is a fourth
return, though there should be at most three. So we cannot have
more than two classes of abelian equivalence in a binary word
having two or three abelian returns, i.e., such word should be
balanced. Proposition \ref {sufficiency} is proved.\qed

\begin{lemma}\label {1-balanced}Let $w \in \{0,1\}^\omega$ be a recurrent balanced word. Then $w$ is either Sturmian or periodic. In the latter case there exists a (possibly empty) bispecial factor $B$ of $\omega$ and a letter $a\in \{0,1\}$ such that $aBa$ is a factor of $w$ having exactly one first return in $w.$ Since $aBa$ is the unique element in its abelian class, it follows that if $w$ is periodic then $w$ contains a factor having only one semi-abelian return.
\end{lemma}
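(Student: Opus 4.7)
The plan is to prove the dichotomy first and then construct the singular factor in the periodic case.

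For the dichotomy I would invoke the classical characterization recalled in the introduction: a binary word is Sturmian if and only if it is aperiodic and balanced. Since $w$ is recurrent, being ultimately periodic means being periodic. So if $w$ is not Sturmian it is periodic.

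In the periodic case, write $w = u^\omega$ with $u$ primitive, $q = |u|$, $p = |u|_1$, and $\gcd(p,q) = 1$. Balance forces $u$ to be a cyclic conjugate of a Christoffel word of slope $p/q$. After possibly exchanging $0$ and $1$, I may assume $p \leq q - p$. The border case $p = q - p$ together with $\gcd(p,q) = 1$ forces $p = 1$, $q = 2$, i.e.\ $w = (01)^\omega$, which I treat separately. In the remaining case $p < q - p$ the letter $1$ is isolated in $w$, the letter $0$ occurs in maximal blocks of lengths in $\{M-1, M\}$ for some $M \geq 2$, and the block $0^M$ appears exactly once within each period of $u$. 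Set $a = 0$ and $B = 0^{M-1}$, so $aBa = 0^M$. I would check that $B$ is bispecial by exhibiting its two extensions on each side: $B \cdot 0 = 0^M$ and $B \cdot 1 = 0^{M-1}1$ are both factors of $w$, and symmetrically on the left. Since $0^M$ occurs exactly once per period, any two consecutive occurrences of $aBa$ in $w$ are separated by exactly one cyclic conjugate of $u$, giving $aBa$ exactly one first return. Moreover $0^M$ is trivially the only binary word of length $M$ with $M$ zeros, hence alone in its abelian class and singular, so its semi-abelian returns coincide with its ordinary first returns and there is just one.

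For the leftover case $w = (01)^\omega$, both letters are isolated, $M = 1$ gives $B = \varepsilon$ and $aBa = 00 \notin F(w)$, so the $aBa$ prescription degenerates. I would instead observe directly that the single-letter factor $0$ is the only element of its abelian class $\{0\}$ and has the unique first return $01$, which still yields the factor with exactly one semi-abelian return claimed by the final sentence of the lemma. The main technical obstacle is the structural claim that in a balanced primitive binary word the maximum $0$-block is unique per period and that $B = 0^{M-1}$ is bispecial; both reduce to standard properties of cyclic conjugates of Christoffel words, which I would apply as input from the theory of balanced words.
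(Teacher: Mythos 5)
Your reduction of the dichotomy to the classical ``Sturmian $=$ aperiodic $+$ balanced'' characterization is fine, as is passing to a primitive period that is a conjugate of a Christoffel word. The gap is the key structural claim that ``the block $0^M$ appears exactly once within each period of $u$.'' Writing $q-p=kp+r$ with $0<r<p$, the primitive period contains $r$ maximal blocks $0^{k+1}$ and $p-r$ blocks $0^{k}$, so the longest block occurs once per period only when $r=1$. Concretely, $w=(00100101)^{\omega}$ is a recurrent periodic balanced word with $p=3$, $q=8$; here $M=2$, the maximal block $00$ occurs twice per period, and your factor $aBa=00$ has the two distinct first returns $001$ and $00101$. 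So the choice $B=0^{M-1}$ does not in general yield a factor with exactly one first return, and the final step of your argument fails. (The factor $00$ is still singular, so it has exactly two returns, consistent with Theorem \ref{singular} --- but the lemma needs a factor with exactly \emph{one}.)

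The paper avoids this by choosing $B$ combinatorially rather than by block lengths: it takes $B=B_{n-1}$, the second longest bispecial factor (the longest one, $B_n$, exists and equals the longest right special factor because $w$ is periodic and recurrent), and lets $a$ be the unique letter for which $aB$ is right special; it then shows that no right special factor begins in $aBa$, which forces $aBa$ to have a unique first return. In the example above this gives $B=010$ and $aBa=00100$, which does occur exactly once per period. Your construction in effect stops at the first bispecial level; to repair it you would need to iterate the derivation (recode the word by its block lengths, which is again balanced periodic, and recurse until the longer block really is unique per period), which is what the choice $B=B_{n-1}$ achieves in one step. Your separate treatment of $(01)^{\omega}$ via the letter $0$ is correct, and that degenerate case (where $\varepsilon$ is the only bispecial factor) in fact also needs special handling in the paper's argument.
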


\begin{proof} Since $w$ is assumed balanced, $w$ contains at most one right special factor for each length $n.$ If $w$ is not Sturmian, then $w$ is ultimately periodic, and hence periodic since it is recurrent. From here on we shall assume that $w$ is periodic. Thus $w$ has only a finite number of right special factors.  As $w$ is recurrent,  the longest right special factor of $w$ is also a bispecial factor of $w.$  Let $\varepsilon=B_0,B_1,\ldots, B_n$ denote the bispecial factors of $w$ in order of increasing length.  Thus $B_n$ is also the longest right special factor of $w.$
Set $B=B_{n-1}.$ Then there exists a unique letter $a\in \{0,1\}$
such that $aB$ is a right special factor. In particular both $aBa$
and $bBa$ are factors of $w$ where $a\neq b\in \{0,1\}.$ We
claim that the only right special factor of $w$ which begins
in $Ba$ is $B_n.$ Clearly, $B_n$ is a right special factor
beginning in $Ba$ (since $Ba$ is left special and hence must
coincide with the prefix of $B_n$ of its same length). To see that
no other right special factor of $w$   begins in $Ba,$ let
$R$ denote the shortest right special factor of $w$ beginning
in $Ba.$ Then $R$ is also left special and hence bispecial. It
follows that $R=B_n.$ Since $B_n$ is also the longest right
special factor of $w$ the claim is established. Having
established the claim, it follows that
 $aBa$ has a unique first return in $w.$ If not,  there would exist a right special factor beginning in $aBa.$ From the previous claim it would follow that $aB_n$ is right special contradicting that $B_n$ is the longest right special factor.\end{proof}

\noindent We are now ready to prove the sufficiency condition:

\begin{corollary} \label {corollary_suf}
If each factor of a binary recurrent infinite word has two or
three abelian returns, then the word is Sturmian.
\end{corollary}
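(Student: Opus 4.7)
The proof is essentially a synthesis of the two main technical results already established, so I plan to simply chain them together.

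First I would verify that the hypothesis of Corollary \ref{corollary_suf} implies the hypothesis of Proposition \ref{sufficiency}. Since the number of semi-abelian returns to any abelian class is always at least the number of abelian returns (every abelian return is an abelian class of some semi-abelian return), the assumption that each factor has at least two abelian returns automatically gives at least two semi-abelian returns. Combined with the assumption that each factor has at most three abelian returns, Proposition \ref{sufficiency} applies and yields that $w$ is balanced.

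Next I would invoke Lemma \ref{1-balanced}: a recurrent balanced binary word is either Sturmian or periodic, and in the periodic case it contains a factor of the form $aBa$ (with $B$ bispecial and $a\in\{0,1\}$) which is the unique element of its abelian class and has exactly one first return. In particular, in the periodic case some factor has only one semi-abelian return, hence only one abelian return. This contradicts the standing hypothesis that every factor has two or three abelian returns. Therefore the periodic alternative is ruled out, and $w$ must be Sturmian.

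There is no real obstacle here since all the heavy lifting is already done. The only thing to be careful about is the logical direction of the inequality between abelian and semi-abelian returns, and the observation that ``exactly one semi-abelian return'' forces ``exactly one abelian return'' (a singleton abelian class of semi-abelian returns), which is what lets Lemma \ref{1-balanced} interface cleanly with the hypothesis.

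\begin{proof}[Proof of Corollary \ref{corollary_suf}]
Assume each factor of the binary recurrent word $w$ has two or three abelian returns. Since the number of semi-abelian returns to a factor is always greater than or equal to the number of its abelian returns, each factor of $w$ has at least two semi-abelian returns and at most three abelian returns. By Proposition \ref{sufficiency}, $w$ is balanced. By Lemma \ref{1-balanced}, $w$ is either Sturmian or periodic, and in the periodic case $w$ contains a factor having only one semi-abelian return, which would then have only one abelian return, contradicting the hypothesis. Hence $w$ is Sturmian.
\end{proof}
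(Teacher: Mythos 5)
Your proof is correct and is exactly the route the paper takes: the paper's proof is the one-line ``Follows from Proposition \ref{sufficiency} and Lemma \ref{1-balanced},'' and you have simply spelled out the two interface points (that two abelian returns force two semi-abelian returns so Proposition \ref{sufficiency} applies, and that one semi-abelian return forces one abelian return so the periodic alternative of Lemma \ref{1-balanced} is excluded). Both of those observations are sound and are the intended glue.
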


\noindent\emph{Proof.} Follows from Proposition \ref{sufficiency}
and Lemma \ref {1-balanced}. \qed

%\noindent \textbf{Remark.} In Proposition \ref {sufficiency}
%instead of recurrence property one can consider a weaker property
%of abelian recurrence in the sense that for every factor $u$ of
%$w$ there exists a factor $u'$ from the abelian class of $u$ which
%occurs infinitely many times in $w$.

%\bigskip

%\noindent \textbf{Remark.} Note that in the proof of Proposition
%\ref {sufficiency} we used the condition that there are no factors
%with one abelian return only on the following way. While
%restricting the form of $w$ when we obtained periodic words of a
%certain form, we found their factors having one abelian return and
%concluded that these periodic words are impossible. So, we
%actually proved that an infinite binary word with each factor
%having at most three abelian returns is either periodic or
%Sturmian. Moreover, in each case we obtained a periodic word with
%a factor having only one abelian return, this factor had only one
%semi-abelian return. So, we also proved that in an infinite binary
%word with each factor having at most three abelian returns there
%is a factor having one semi-abelian return. We will use this fact
%in the next section.

\begin{corollary} \label {aperiodic}
An aperiodic recurrent infinite word $w$ is Sturmian if and only
if each factor $u$ of $w$  has two or three abelian returns in $w.$
\end{corollary}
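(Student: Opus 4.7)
The plan is to derive Corollary \ref{aperiodic} from Theorem \ref{main} by showing that the hypothesis on abelian returns forces the alphabet to have exactly two letters, which then reduces the aperiodic case to the binary one already handled.

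For the forward implication, I would observe that every Sturmian word is aperiodic (since its complexity is $p(n)=n+1$, which is unbounded) and binary. Proposition \ref{necessity} then gives immediately that each factor has two or three abelian returns.

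For the converse, suppose $w$ is aperiodic and recurrent and that each of its factors has two or three abelian returns. Let $\Sigma$ be the alphabet of $w$ and set $k=|\Sigma|$. The hypothesis implies in particular that each factor has \emph{at most} three abelian returns. If $k\geq 3$, then the hypothesis includes that each factor has at most $k$ abelian returns, so Lemma \ref{periodic} would force $w$ to be periodic, contradicting aperiodicity. Hence $k\leq 2$, and since $k=1$ would also make $w$ periodic, we must have $k=2$, i.e. $w$ is binary. Now Theorem \ref{main} applies directly and yields that $w$ is Sturmian.

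There is essentially no obstacle here: the corollary is just the combination of Theorem \ref{main} with the alphabet-size consequence of Lemma \ref{periodic}. The only point that requires a moment of care is checking that the hypothesis ``two or three abelian returns'' is indeed stronger than the ``at most $k$'' condition needed to invoke Lemma \ref{periodic} when $k=3$, which it clearly is.
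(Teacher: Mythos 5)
Your proposal is correct and follows essentially the same route as the paper: the paper's proof of this corollary is precisely the observation that Lemma \ref{periodic} forces an aperiodic word whose factors all have at most three abelian returns to be binary, after which Theorem \ref{main} (equivalently, Proposition \ref{necessity} and Corollary \ref{corollary_suf}) applies. Your elaboration of the alphabet-size argument, including the check that ``at most three'' implies ``at most $k$'' only when $k\geq 3$, matches the intended reasoning.
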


\noindent\emph{Proof.} Lemma \ref {periodic} implies
that an aperiodic word with $2$ or $3$ abelian returns must
necessarily be binary. \qed

\section{Proof of Theorem \ref {semi-abelian}}

In this section we prove the characterization of Sturmian words in
terms of semi-abelian returns.

\medskip

\noindent\emph{Proof of Theorem \ref {semi-abelian}.} We have that
for every factor in an infinite word the number of its
semi-abelian returns is not less than the number of abelian
returns. So, Proposition \ref{sufficiency} and Lemma \ref{1-balanced}
imply that if each factor of an infinite binary recurrent word has
two or three semi-abelian returns, then the word is Sturmian.

Now we prove that each factor of a Sturmian word has at most three
semi-abelian returns. Suppose that a factor $v$ of a Sturmian word
has more than three semi-abelian returns. By Proposition
\ref{necessity} this factor has at most three abelian returns, so
there are at least two semi-abelian returns which are abelian
equivalent. Due to Proposition \ref {aBb}, semi-abelian returns to
factors of Sturmian words are Christoffel words, i.e., letters or
words of the form $aBb$, so if we have more than three
semi-abelian returns to $v$, then there should be both returns
$0B1$ and $1B0$.

In the case $|v|\geq |0B1|$ the return $0B1$ is given by a factor
$0B1x1B0$ for some $x\in\{0,1\}^*$, where $0B1x$ is abelian equivalent to $v$. The return
$1B0$ is given by a factor $1B0y0B1$ for some $y\in\{0,1\}^*$, where $1B0y$ is abelian
equivalent to $v$. So, we have factors $1x1$ and $0y0$, where $x$
and $y$ are abelian equivalent, a contradiction with balance.

In the case $1<|v|<|0B1|$ we have a factor $z$ whose
(intersecting) prefix and suffix are $0B1$ and $1B0$, resp., and
another factor $z'$ of the same length whose prefix and suffix are
$1B0$ and $0B1$, resp. So $B$ should have $1$ and $0$ at the same
position.

If $|v|=1$, i.e., $v$ is a letter, it is easy to see that $v$ has
two semi-abelian returns.

Thus, two different semi-abelian returns of the same length
greater than $1$ are impossible. This concludes the proof. \qed

\bigskip

Similarly to Corollary \ref {aperiodic}, we get

\begin{corollary}
An aperiodic recurrent infinite word $w$ is Sturmian if and only
if each factor $u$ of $w$  has two or three semi-abelian returns in $w.$
\end{corollary}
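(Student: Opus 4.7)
The plan is to mirror the argument used for Corollary \ref{aperiodic} in the abelian setting, exploiting the semi-abelian analogue of Lemma \ref{periodic} that was already established in Section 3. For the ``only if'' direction, a Sturmian word is by definition aperiodic and recurrent, and Theorem \ref{semi-abelian} says that every factor of a Sturmian word has two or three semi-abelian returns, so there is nothing to do.

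For the ``if'' direction, I would assume that $w$ is aperiodic, recurrent, and that every factor of $w$ has two or three semi-abelian returns, and aim to show that $w$ is Sturmian. Since Theorem \ref{semi-abelian} already handles the binary case, the only thing to verify is that $w$ must be binary. This is precisely where I would invoke the semi-abelian analogue of Lemma \ref{periodic} stated in Section 3: if $w$ is written over an alphabet $\Sigma$ of size $k$ and every factor of $w$ has at most $k$ semi-abelian returns, then $w$ is periodic. If $|\Sigma| \geq 3$, then the hypothesis that every factor has at most three semi-abelian returns would force $w$ to be periodic, contradicting aperiodicity. Hence $|\Sigma| \leq 2$, i.e., $w$ is a binary word, and Theorem \ref{semi-abelian} yields that $w$ is Sturmian.

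There is no real obstacle: the entire proof is a two-line deduction from Theorem \ref{semi-abelian} and the semi-abelian periodicity lemma. The only conceptual point worth noting explicitly is that aperiodicity is used solely to rule out alphabets of size at least three (the binary case with periodicity is already excluded by Theorem \ref{semi-abelian} itself, since in a periodic binary recurrent word one can produce a factor with only one semi-abelian return, as shown in Lemma \ref{1-balanced}). Thus the statement follows by combining Theorem \ref{semi-abelian} with the semi-abelian version of Lemma \ref{periodic}.
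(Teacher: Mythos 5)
Your proposal is correct and is essentially identical to the paper's argument: the paper proves this corollary "similarly to Corollary \ref{aperiodic}," i.e., by using the semi-abelian version of Lemma \ref{periodic} to force an aperiodic word with at most three semi-abelian returns per factor to be binary, and then invoking Theorem \ref{semi-abelian} for the binary case. No gaps.
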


\bibliographystyle{elsarticle-num}
\bibliography{<your-bib-database>}

\end{document}